\newtheorem{remark}{Remark}[section]
\newtheorem{example}{Example}[section]
\newtheorem{assumption}{Assumption}[section]
\newcommand{\beq}{\begin{equation}}
\newcommand{\eeq}{\end{equation}}
\newcommand{\RR}{{\mathbb{R}}}
\newcommand{\CC}{{\mathbb{C}}}
\def\eps{\varepsilon}
\newcommand{\Id}{{\rm I}}
\newcommand\sym{\mathrm{Sym}}
\newcommand\skews{\mathrm{Skew}}
\newcommand{\W}{\mathbb{W}}
\def\bng{\color{black}}
\def\eng{\color{black}}
\title{On the existence and approximation of a dissipating feedback\thanks{Version of September
29, 2019.}}
\author{Nicola Guglielmi\footnotemark[1] \and Valeria Simoncini\footnotemark[3]}
\begin{document}

\maketitle

\renewcommand{\thefootnote}{\fnsymbol{footnote}}
\footnotetext[1]{Gran Sasso Science Institute,
via Crispi 7, I-67010    L' Aquila,  Italy. 
Email: {\tt nicola.guglielmi@gssi.it}}
\footnotetext[3]{Dipartimento di Matematica, 
Alma Mater Studiorum - Universit\`a di Bologna,
Piazza di Porta San Donato 5,
       I-40127 Bologna,
       Italy, and IMATI-CNR, Pavia, Italy. 
Email: {\tt valeria.simoncini@unibo.it}}
\renewcommand{\thefootnote}{\arabic{footnote}}

\begin{abstract}
Given a matrix 
$A\in \RR^{n\times n}$ and a
tall rectangular matrix $B \in \RR^{n\times q}$, $q < n$, 
we consider the problem of making the pair $(A,B)$ dissipative, that is the determination
of a {\it feedback} matrix $K \in \RR^{q\times n}$ such that the field of values of
$A-B K$ lies in the left half open complex plane. We review and expand 
classical results available in the literature on the existence and parameterization
of the class of dissipating matrices, and we explore new matrix properties associated with 
the problem.
In addition, we discuss various computational strategies for approximating
the minimal Frobenius norm dissipating $K$.
\end{abstract}

\begin{keywords}
Passivation of a matrix pair; matrix stabilization; stabilizing feedback; matrix nearness problems; 
constrained gradient flow.
\end{keywords}

\begin{AMS}15A18, 65K05 \end{AMS}

\pagestyle{myheadings}
\thispagestyle{plain}
\markboth{N.~Guglielmi and V.~Simoncini}{Matrix dissipation: analysis and computations}

\section{Introduction}
A linear dynamical system 
\begin{equation}
\label{eq:ds}
\dot x = A x 
\end{equation}
with $A\in\RR^{n\times n}$ is said to be dissipative if the matrix $A$ has a field of values 
$W(A) = \{z \in \CC : z=x^* A x, x \in \CC^{n}, \| x \| = 1 \}$ contained in the left half
open complex plane\footnote{Analogously, a system is said to be weakly dissipative if $W(A)$ 
 is contained in the closed left half complex plane $\bar{\CC}^-$, which includes the imaginary axis,
and $W(A) \cap {\rm i} \RR \neq \emptyset$.}
$\CC^-$.
Here $x^*$ stands for the conjugate transpose of the complex vector $x$, and $\|x\|$ is the Euclidean norm. 
Dissipativity implies passivity, that is the property that a system requires no external energy to operate, thus the problem of transforming a 
non-passive system into a passive one by means of controls, plays an important role. 
{\color{black} Here we consider a control in the form $u=K x$ and a control matrix 
$B \in \RR^{n\times q}$ and $q<n$ (typically $q \ll n$), giving rise to the dynamical system
\begin{equation}\label{eqn:dcs}
\left\{ \begin{array}{rcl}
\dot x & = & A x - B u \\
u & = & K x 
\end{array}
\right.
\end{equation} 
Our aim is to find a matrix $K$ such that the system becomes dissipative.}
\eng
We shall call such a $K$ a ``dissipating feedback matrix''.
{\color{black} Dissipativity is an important property to be preserved in the
dynamical system. If possible, representations of the physical 
system that are naturally dissipative have
attracted significant interest, also in the very recent literature; this is the case,
for instance, for the port-Hamiltonian representation;
see e.g. \cite{vdS06,JZ12,GS18,BMV19}. Here we assume that only the data $A, B$ in
(\ref{eqn:dcs}) are available for a dissipativity analysis.} 
%
%
\eng

The considered problem is of great relevance 
in many applications; see, e.g., \cite{HP10,WMcK07}.
Several interesting examples are described in \cite{L79}, while linear models for real life mechanical, 
electrical and electromechanical control systems are considered in \cite{FPEN86}. 

%

%
\subsection{The problems}
We are interested in the problem of finding a (possibly weakly) 
dissipating feedback matrix $K$
to a {\it non-dissipative} linear control system of the form \eqref{eqn:dcs}.
%
Hence, the existence of a dissipating feedback matrix $K$ ensures
that the {\it closed-loop} linear system $\dot x = (A  - B K) x$ is dissipative. 
The feedback matrix $K$ is called {\it weakly} dissipating 
if $W(A-BK) \subset \bar{\CC}^-$ and $W(A-BK) \cap {\rm i} \RR \neq \emptyset$.
For real data, weak dissipativity
clearly implies that the field of values boundary passes through
the origin. 

In matrix terms the first problem can be stated as follows:

\bng
{{\bf P1.} \it Given $A \in \RR^{n\times n}$ and $B \in \RR^{n\times q}$, with
$q<n$, find 
a matrix $K \in \RR^{q \times n}$ such that the field of values of $A-BK$ is contained
in the left half open (closed) complex plane.} 
\eng

Throughout we assume that $A$ is stable, that is its eigenvalues are all in $\CC^-$, 
however $W(A)$ has nonzero intersection with the 
right half open complex plane.  


The problems of existence and representation of a feedback matrix
has been extensively analyzed in the control community; a widely used result
stated as \cite[Theorem 2.3.12]{Skeltonetal.98}
ensures the existence of $K$ under hypotheses on the data, while providing a parameterization
of all dissipating matrices. 
We revisit this parameterization, and observe that it 
may not include all possible feedback matrices. By using an alternative
proof of their existence, we thus propose alternative
parametrizations of dissipating matrices, and highlight the
actual degrees of freedom associated with the problem.

The concept of dissipativity is tightly related to other definitions of stability, which
are largely investigated in the Control community.
For real data,  dissipativity of $A$ corresponds to ensuring that $\frac 1 2 \lambda_{\max}( A +A^T) <0$,
where $\lambda_{\max}(\cdot)$ is the rightmost eigenvalue of the argument matrix.
Weak dissipativity requires that $\frac 1 2 \lambda_{\max}( A +A^T) \le 0$.
The quantity $\mu_2(A) = \frac 1 2 \lambda_{\max}( A +A^T)$ is called the numerical abscissa
(see, e.g., \cite{D59,S06}),
and it monitors the exponential stability (alas contractivity) property of the system solution 
$x(t)$, since it holds
$$
\|x(t_2)\| \le e^{\mu_2(A) (t_2-t_1)} \|x(t_1)\|.
$$
Clearly, if $\mu_2(A) \le 0$, then $\|x(t_2)\| \le \|x(t_1)\|$ and the system
is said to be exponentially stable.
In particular, concepts like $(M,\beta)$-stability are introduced and characterized 
(see e.g. \cite{HPW02,PP92}), meaning - for a given matrix $G$ - 
that  $\| {\rm e}^{G t} \| \le M {\rm e}^{\beta t}$.
For the system \eqref{eqn:dcs} the matrix is $G=A-BK$.
In our setting,  if $K$ is such that the field of values $W(A-BK)$ is all in $\CC^-$,
then  the system is $(1,\mu_2(A-B K))$-stable.  

The feedback matrix $K$ may be required to have additional properties, such
as a small norm. \bng Thus we also consider the problem: 

{{\bf P2.} \it Given $A \in \RR^{n\times n}$ and $B \in \RR^{n\times q}$, with
$q<n$, find 
a matrix $K \in \RR^{q \times n}$ of minimal norm such that the field of values 
of $A-BK$ is contained in the left half open (closed) complex plane.} 
\eng

{\color{black} For P2 we will consider minimization both in the Frobenious norm and
in the matrix norm induced by the vector Euclidean norm.}
Following standard approaches, we formulate 
problem {\color{black}P2} as an optimization procedure with inequality matrix constraints, thus
falling into a {\it linear matrix inequalities} (LMI) framework \cite{BEFB94,Skeltonetal.98}.
As an alternative we explore the use of a functional approach, which is a
variant of the method recently proposed in \cite{GL17}. Numerical experiments
on selected data illustrate the performance of tested methods.

In addition to the notation already introduced, the following definitions will be
used throughout.
 $I_n$ denotes the identity matrix of dimension $n$, and the subscript is removed
when clear from the context. For a square matrix $M$, 
$\sym(M) = (M+M^T)/2$ denotes its symmetric part
\bng
and $\skews(M) = (M-M^T)/2$ denotes its skew-symmetric part.
\eng
We denote by $\|\cdot\|_F$ the Frobenius norm on $\RR^{n\times n}$ and by 
$\langle X,Y \rangle = \mathrm{trace}(X^{\rm T} Y)= 
\sum_{i,j=1}^n  x_{ij} y_{ij}$ the corresponding inner product. Moreover,
$\|\cdot\|_2$ denotes the matrix norm induced by the Euclidean vector norm.

\section{Known existence results and parameterization}\label{sec:classics}
Conditions on the existence of a dissipating matrix
have been known for quite some time in the Control community. A
thorough and insightful discussion is available in the monograph \cite{Skeltonetal.98}.
The following fundamental theorem provides existence conditions for the matrix $K$ such that
$BKC+(BKC)^* + Q < 0$, for given $B, C$ and a symmetric $Q$ \cite[Theorem 2.3.12]{Skeltonetal.98}.

\begin{theorem}\label{th:Skelton}
Let the matrices $B\in\CC^{n\times q}$, $C\in\CC^{k\times n}$ and $Q=Q^*\in\CC^{n\times n}$ be
given. Then the following statements are equivalent:
\begin{itemize}
\item[(i)] There exists a matrix $K$ satisfying $BKC+(BKC)^* + Q < 0$.
\item[(ii)] The following two conditions hold
\begin{eqnarray*}
B^\perp Q (B^\perp)^* < 0 \quad or \quad BB^* > 0 \\
(C^*)^\perp Q ((C^*)^\perp)^* < 0 \quad or \quad C^* C > 0 .
\end{eqnarray*}
\end{itemize}
Suppose the above statements hold. Let 
$B=B_\ell B_r$, $C=C_\ell C_r$ be the full rank factorizations of $B$ and $C$, respectively. 
Then all matrices $K$ in statement (i) are given by
$$
K = B_r^+ H C_\ell^+ + Z - B_r^+ B_r Z C_\ell C_\ell^+
$$
where $Z$ is an arbitrary matrix and
\begin{eqnarray*}
H &:=& - R^{-1} B_\ell^* \Phi C_r^*(C_r\Phi C_r^*)^{-1} + S^{1/2} L (C_r\Phi C_r^*)^{-1/2} \\
S &:=& R^{-1} - R^{-1} B_\ell^*( \Phi - \Phi C_r^*(C_r\Phi C_r^*)^{-1}C_r\Phi) B_\ell R^{-1},
\end{eqnarray*}
where $L$ is an arbitrary matrix such that $\|L\| <1$ and $R$ is an arbitrary positive definite
matrix such that $\Phi : = (B_\ell R^{-1}B_\ell^* - Q)^{-1} > 0$.
\end{theorem}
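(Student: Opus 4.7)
The plan is to split the proof into two parts: first the equivalence of (i) and (ii), and then the parameterization of $K$, which also supplies the nontrivial direction of the equivalence.

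For (i) $\Rightarrow$ (ii) I would use an annihilator argument. If $K$ satisfies the strict inequality and $B$ is not of full row rank, let $B^\perp$ be a matrix whose rows span the left null space of $B$, so that $B^\perp B = 0$. Multiplying the inequality on the left by $B^\perp$ and on the right by $(B^\perp)^*$ cancels the $BKC$ term and leaves $B^\perp Q (B^\perp)^* < 0$; the disjunction with $BB^* > 0$ merely absorbs the case where $B$ already has full row rank, in which case the constraint is vacuous. The symmetric argument applied to $C$ yields the analogous condition.

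For (ii) $\Rightarrow$ (i) I would proceed constructively and produce the parameterization along the way. Using the full rank factorizations, set $H := B_r K C_\ell$ so that $BKC = B_\ell H C_r$. Since $B_r$ has full row rank and $C_\ell$ has full column rank, the linear matrix equation $B_r K C_\ell = H$ is consistent for every $H$, and by standard theory the general solution is $K = B_r^+ H C_\ell^+ + Z - B_r^+ B_r Z C_\ell C_\ell^+$ with $Z$ free (the last two terms parametrize the kernel of the linear map $K \mapsto B_r K C_\ell$). This reduces the problem to characterizing every $H$ with $B_\ell H C_r + C_r^* H^* B_\ell^* + Q < 0$, where now $B_\ell$ has full column rank and $C_r$ has full row rank.

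The core of the argument is the reduced inequality. I would fix any $R > 0$ with $B_\ell R^{-1} B_\ell^* > Q$, which is feasible under condition (ii) because $B_\ell$ has full column rank (for instance, choose $R$ of sufficiently small norm). Setting $\Phi := (B_\ell R^{-1} B_\ell^* - Q)^{-1} > 0$ and completing the square around $B_\ell R^{-1} B_\ell^*$ produces the identity
\begin{equation*}
B_\ell H C_r + C_r^* H^* B_\ell^* + Q = (B_\ell + C_r^* H^* R) R^{-1} (B_\ell^* + R H C_r) - C_r^* H^* R H C_r - \Phi^{-1}.
\end{equation*}
A Schur complement step recasts the resulting inequality as a contractivity condition of the form $L^* L < I$, where $L$ is an affine function of $H$ through $R$ and $\Phi$. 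Inverting this change of variable delivers the stated formula for $H$: the term $-R^{-1} B_\ell^* \Phi C_r^*(C_r \Phi C_r^*)^{-1}$ corresponds to $L=0$, the summand $S^{1/2} L (C_r \Phi C_r^*)^{-1/2}$ parametrizes the remaining freedom through $L$ with $\|L\|<1$, and $S$ emerges as the Schur complement that normalizes the square root, with $S > 0$ following automatically from $\Phi > 0$.

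The main obstacle I anticipate is the final step: extracting $L$ cleanly from the Schur complement identity and verifying that the map $L \mapsto H$ is a bijection between the open unit ball $\{\|L\|<1\}$ and the admissible set of $H$, rather than only a one-sided inclusion. Once this parameterization of $H$ is secured, combining it with the earlier parameterization of $B_r K C_\ell = H$ yields the formula for $K$ in the statement.
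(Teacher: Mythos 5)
First, a structural point: the paper gives no proof of Theorem~\ref{th:Skelton} at all --- it is imported verbatim from \cite[Theorem~2.3.12]{Skeltonetal.98} and used as a known result, so there is no in-paper argument to compare yours against step by step. That said, your sketch (annihilator multiplication for (i)$\Rightarrow$(ii); reduction via the full-rank factorizations to an inequality in $H=B_rKC_\ell$; completion of squares around $B_\ell R^{-1}B_\ell^*$ introducing $\Phi$; a Schur-complement step turning the inequality into a contraction condition on $L$) is the standard route in that reference, and your square-completion identity checks out. One small repair: the feasibility of choosing $R>0$ with $\Phi=(B_\ell R^{-1}B_\ell^*-Q)^{-1}>0$ does not follow from $B_\ell$ having full column rank (for $q<n$ the matrix $B_\ell R^{-1}B_\ell^*$ is singular); it follows from condition (ii), i.e.\ $B^\perp Q(B^\perp)^*<0$, via a Finsler-type argument with $R^{-1}=\rho I$, $\rho$ large.

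The obstacle you flag at the end --- proving that $L\mapsto H$ maps the open unit ball \emph{onto} the admissible set rather than merely into it --- is not a technicality you can expect to dispatch; it is exactly the point at which this paper argues the theorem, as stated, fails. In Section~2 the authors carry out essentially your computation for $C=I$: with $\widetilde B=\Phi^{1/2}BR^{-1/2}$ and the normalization \eqref{eqn:orth}, the dissipativity condition reduces via \eqref{eqn:L} to $-I+L\widetilde B^*+\widetilde BL^*-\widetilde B\widetilde B^*<0$. They then show that $\|L\|<1$ is sufficient for this but not necessary, and exhibit an explicit example (with $L=\tfrac12\widetilde B$, $\|L\|\ge 1$, yielding $-I<0$) of a dissipating $K$ outside the claimed parameterization; their stated conclusion is that the ``all matrices $K$'' clause captures only a subset of the dissipating feedbacks. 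So the forward inclusion ($\|L\|<1$ implies the inequality) is provable along your lines, but the converse inclusion your plan defers is precisely where the statement appears to be false (at least for a fixed $R$), and this gap is the motivation for the alternative parameterizations developed in the rest of the paper. Any honest completion of your proof would have to either restrict the claim to a one-sided inclusion or re-examine how the quantification over $R$ and $L$ interacts with surjectivity.
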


Here $Q$ plays the role of $A+A^*$, so that item $(i)$ precisely corresponds to our setting.
In the theorem statement, $(B^\perp)^*$ is the matrix spanning the null space of $B^*$.
The theorem thus says that $K$ exists if and only if $Q$ is negative definite on the Kernel
of $B^*$ and on the Kernel of $C$, or otherwise, if $B$ ($C$) has full row (column) rank.
The theorem also provides a parameterization of dissipating matrices.

The following corollary specializes the result to our case, where $C=I$; see also
\cite[Corollary 2.3.9]{Skeltonetal.98}.

\begin{corollary}\label{cor:skelton}
Assume $C=I$ and $B$ full column rank, so that $B_\ell=B$ and $B_r=I$. 
With the notation of Theorem \ref{th:Skelton}, we have
$$
K = H = -R^{-1} B^* + R^{-\frac 1 2}L \Phi^{-\frac 1 2}, 
$$
where $L\in\RR^{q\times n}$ is an arbitrary matrix such that $\|L\| < 1$ and 
$R\in\RR^{q\times q}$ is an arbitrary positive definite
matrix such that $\Phi:= ( B R^{-1} B^* - Q)^{-1} >0$.
\end{corollary}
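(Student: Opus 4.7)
The plan is to obtain the corollary as a direct specialization of Theorem \ref{th:Skelton}, since under the hypotheses $C=I_n$ and $B$ having full column rank, the existence conditions in item (ii) hold trivially ($C^*C = I_n > 0$ and $BB^* \ge 0$ suffices together with $B^\perp Q (B^\perp)^* < 0$, which is guaranteed by the implicit assumption that $\Phi > 0$ can be chosen). So the work is entirely to collapse the general parameterization formulas to the stated form.

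First, I would fix the full rank factorizations compatible with the hypotheses: since $B$ has full column rank, take $B_\ell = B \in \RR^{n\times q}$ and $B_r = I_q$; since $C=I_n$, take $C_\ell = I_n$ and $C_r = I_n$. With these choices the pseudoinverses trivialize, $B_r^+ = I_q$ and $C_\ell^+ = I_n$, and the general expression
$$
K = B_r^+ H C_\ell^+ + Z - B_r^+ B_r Z C_\ell C_\ell^+
$$
immediately reduces to $K = H$, so the free matrix $Z$ disappears from the parameterization.

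Second, I would substitute $C_r = I_n$ into the formula for $H$. The factor $C_r^*(C_r \Phi C_r^*)^{-1}$ becomes $\Phi^{-1}$, which cancels the adjacent $\Phi$ in the first summand, and $(C_r \Phi C_r^*)^{-1/2}$ becomes $\Phi^{-1/2}$; hence $H = -R^{-1} B^* + S^{1/2} L \,\Phi^{-1/2}$. Finally, the inner bracket in the definition of $S$ simplifies because $C_r = I_n$ makes $\Phi C_r^* (C_r \Phi C_r^*)^{-1} C_r \Phi = \Phi \Phi^{-1} \Phi = \Phi$, so that bracket vanishes and $S = R^{-1}$. Thus $S^{1/2} = R^{-1/2}$ and the claimed formula follows.

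There is essentially no obstacle: the proof is a careful bookkeeping of substitutions, and the only point worth stating explicitly is that $R^{-1/2}$ and $\Phi^{-1/2}$ are well defined because $R$ and $\Phi$ are positive definite by assumption, while the constraint $\|L\| < 1$ and the form of the constructed $H$ ensure that $BK + (BK)^* + Q < 0$ via the equivalence already established in Theorem \ref{th:Skelton}.
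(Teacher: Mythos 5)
Your proof is correct and follows the same route the paper (implicitly) takes: the corollary is obtained by direct substitution of $B_\ell=B$, $B_r=I_q$, $C_\ell=C_r=I_n$ into the parameterization of Theorem \ref{th:Skelton}, which makes $Z$ drop out, collapses the bracket in $S$ to zero so that $S=R^{-1}$, and reduces $H$ to $-R^{-1}B^*+R^{-1/2}L\Phi^{-1/2}$. Your side remark that the existence condition $B^\perp Q(B^\perp)^*<0$ is forced by the assumption $\Phi>0$ (restrict $BR^{-1}B^*-Q>0$ to $\ker B^*$) is a correct and worthwhile observation, since $BB^*>0$ cannot hold when $q<n$.
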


The role of the matrix $R$ is to push into the positive half complex plane the indefinite
 matrix $-Q = -(A + A^*)$. In \cite{Skeltonetal.98} the choice $R=\rho I_q$ for some large enough $\rho$
 is considered sufficient to be able to obtain a positive definite $\Phi$. However, by doing so,
some degrees of freedom may be lost. In particular, if one is interested in
a norm minimizing $K$, a full symmetric positive definite $R$ should be considered.

{By going through the proof of the previous theorem, it is possible to show
that there exist dissipating matrices that are not represented by the parameterization
$K$ given above.}
To this end, we first deepen our understanding of the quantities involved in
the classical parametrization in terms of invariant subspaces.
This will allow us to capture the role of the free matrices $R$ and $L$.
For $K=-R^{-1} B^* + R^{-\frac 1 2}L \Phi^{-\frac 1 2}$  we have\footnote{
Without loss of generality, 
to conform with the notation in \cite{Skeltonetal.98},
there is a change of
sign in the (1,1) block of the saddle point matrix, compared with ${\cal M}$ in (\ref{eqn:M})
later on.}
\begin{eqnarray*}
&& BK + K^* B^* + Q = [I, K^*] \begin{bmatrix} Q & B\\ B^* & 0 \end{bmatrix} 
\begin{bmatrix}I\\ K \end{bmatrix} \\
&=& \Phi^{-\frac 1 2} [ \Phi^{\frac 1 2}, -\Phi^{\frac 1 2} B R^{-\frac 1 2} + L^*  ] 
\begin{bmatrix} I & 0 \\ 0 & R^{-\frac 1 2} \end{bmatrix}
\begin{bmatrix} Q & B\\ B^* & 0 \end{bmatrix}
\begin{bmatrix} I & 0 \\ 0 & R^{-\frac 1 2} \end{bmatrix}
\begin{bmatrix} \Phi^{\frac 1 2} \\ - R^{-\frac 1 2} B^*\Phi^{\frac 1 2}  + L \end{bmatrix} \Phi^{-\frac 1 2}\\
&=& \Phi^{-\frac 1 2} [ I, -\Phi^{\frac 1 2} B R^{-\frac 1 2} + L^*  ] 
\begin{bmatrix} \Phi^{\frac 1 2} & 0 \\ 0 & R^{-\frac 1 2} \end{bmatrix}
\begin{bmatrix} Q & B\\ B^* & 0 \end{bmatrix}
\begin{bmatrix} \Phi^{\frac 1 2} & 0 \\ 0 & R^{-\frac 1 2} \end{bmatrix}
\begin{bmatrix} I \\ - R^{-\frac 1 2} B^*\Phi^{\frac 1 2}  + L \end{bmatrix}\Phi^{-\frac 1 2} \\
&=& 
\Phi^{-\frac 1 2} [ I, -\Phi^{\frac 1 2} B R^{-\frac 1 2} + L^*  ] 
\begin{bmatrix} \widetilde Q & \widetilde B\\ \widetilde B^* & 0 \end{bmatrix}
\begin{bmatrix} I \\ - R^{-\frac 1 2} B^*\Phi^{\frac 1 2}  + L \end{bmatrix}\Phi^{-\frac 1 2} 
\end{eqnarray*}
where $\widetilde Q = \Phi^{\frac 1 2} Q \Phi^{\frac 1 2}$ and $\widetilde B = \Phi^{\frac 1 2} B R^{-\frac 1 2}$.
Therefore,
$BK + K^* B^* + Q <0$ if and only if
$$
[ I, -\widetilde B  + L^*  ] 
\begin{bmatrix} \widetilde Q & \widetilde B\\ \widetilde B^* & 0 \end{bmatrix}
\begin{bmatrix} I \\ - \widetilde B^* + L \end{bmatrix} < 0 ,
$$
so that the block diagonal matrix with $\Phi^{\frac 1 2}$ 
and $R^{-\frac 1 2}$ provides an arbitrary scaling of the original
saddle point matrix; see also Remark \ref{rem:scaling} later on.
Since
$\Phi= ( B R^{-1} B^* - Q)^{-1}$, it follows that
\begin{eqnarray}\label{eqn:orth}
\widetilde B \widetilde B^* - \widetilde Q = \Phi^{\frac 1 2} (BR^{-1} B^* - Q) \Phi^{\frac 1 2} = I .
\end{eqnarray}
After simple algebra we can thus write
\begin{eqnarray}
[ I, -\widetilde B  + L^*  ] 
\begin{bmatrix} \widetilde Q & \widetilde B\\ \widetilde B^* & 0 \end{bmatrix}
\begin{bmatrix} I \\ - \widetilde B^* + L \end{bmatrix}  & = &
 (\widetilde Q - \widetilde B \widetilde B^*) + 
(L\widetilde B^* + \widetilde B L^* - \widetilde B \widetilde B^*) \nonumber\\
&=&
 -I + (L\widetilde B^* + \widetilde B L^* - \widetilde B \widetilde B^*) . \label{eqn:L}
\end{eqnarray}
The first matrix product is negative definite if and only if
 $-I + (L\widetilde B^* + \widetilde B L^* - \widetilde B \widetilde B^*)<0$.
Assume all data are real, and let
$x$ be such that $\|x\|=1$. If $\widetilde B^*x=0$ then
$-x^*x + 2x^*L\widetilde B^*x  - \|\widetilde B^*x\|^2 = -1 <0$ and the inequality is obtained.
If $\|B^* x\|\ne 0$, and under the hypothesis that $\|L\|<1$ we obtain
\begin{eqnarray*}
-x^*x + 2x^*L\widetilde B^*x  - \|\widetilde B^*x\|^2 &\le& 
-1 + 2 \|x^*L\| \|\widetilde B^*x\| - \|\widetilde B^*x\|^2 \\
&< &
-1 + 2 \|\widetilde B^*x\| - \|\widetilde B^*x\|^2  = -(\|\widetilde B^* x\| -1)^2  < 0.
\end{eqnarray*}
In summary, we see that the role of the $q\times q$ matrix
$R$ is to define the positive definite matrix $\Phi$ so
that (\ref{eqn:orth}) holds. 
The matrix $L$ yields the ``if'' statement. However, it seems that $L$ does not
necessarily need to have norm less than one for the desired inequality to be satisfied.
The following example illustrates one such case. In other words, the part of the statement
in Theorem \ref{th:Skelton} stating that all matrices $K$ have the given parametrization 
only considers a subset of all possible dissipating matrices.

\begin{example}
Consider $Q={\rm diag}(\alpha, -\alpha)$, with $\alpha>0$, and $B=e_1=[1;0]$. 
Let us take $R^{-1}=\widehat \alpha$ with $\widehat \alpha > \alpha$. Then
$$
\Phi = (BR^{-1} B^* - Q)^{-1} = {\rm diag}(\frac 1{\widehat\alpha - \alpha}, \frac 1{\alpha}) > 0, \quad
\widetilde B = \Phi^{\frac 1 2} B R^{-\frac 1 2} = 
\frac{\sqrt{\widehat \alpha}}{\sqrt{\widehat\alpha - \alpha}} e_1
$$
 with 
$\|\widetilde B\| = \frac{\sqrt{\widehat \alpha}}{\sqrt{\widehat\alpha - \alpha}} >1$
for all choices of $\alpha >0$ and $\widehat \alpha > \alpha$.
By taking $L = \frac 1 2 \widetilde B$ we can select $\alpha$ and $\widehat \alpha$
so that $\|L\| \ge 1$, while in (\ref{eqn:L}) for this choice of $L$ we have
 $-I + (L\widetilde B^* + \widetilde B L^* - \widetilde B \widetilde B^*) = - I$, which is
clearly negative definite.
\end{example}

\section{An invariant subspace perspective for the parametrization of the dissipating matrix}
In this section we provide a different perspective, that allows us to determine a richer
parametrization of dissipating matrices. We first restate the existence condition in terms
of an eigenvalue problem.
To this end we need to recall a standard result on structured (saddle point) matrices.

\vskip 0.1in
\begin{proposition}(\cite{CC84})\label{prop:spd}
If the matrix $-(A+A^T)$ is positive definite on the kernel of $B^T$,
then the matrix
\begin{eqnarray}\label{eqn:M}
{\cal M} = \begin{bmatrix} -(A+A^T) & B \\ B^T & 0 \end{bmatrix}
\end{eqnarray}
has exactly $n$ positive and $q$ negative eigenvalues.
\end{proposition}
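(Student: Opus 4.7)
The plan is to establish the inertia claim through Sylvester's law of inertia by applying successive congruence transformations that expose the structure forced by the hypothesis. Set $Q := -(A+A^T)$, and decompose $\RR^n = \mathrm{range}(B) \oplus \ker(B^T)$: pick an orthonormal basis $Y \in \RR^{n \times q}$ for $\mathrm{range}(B)$ and $Z \in \RR^{n \times (n-q)}$ for $\ker(B^T)$, so that $P := [Y, Z]$ is orthogonal, $Z^T B = 0$, and $M := Y^T B \in \RR^{q \times q}$ is invertible (since $B$ has full column rank $q$, a thin QR gives $B = Y R$ with $R$ invertible, and $Y^TB = R$). The congruence by $\mathrm{diag}(P, I_q)$, followed by a symmetric row/column permutation bringing the $Z$-block to the leading position, reduces ${\cal M}$ to the inertia-equivalent matrix
$$
\widetilde{\cal M} \;=\; \begin{bmatrix} Z^T Q Z & Z^T Q Y & 0 \\ Y^T Q Z & Y^T Q Y & M \\ 0 & M^T & 0 \end{bmatrix}.
$$

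Next I would invoke the Haynsworth inertia additivity formula anchored on the leading block $Z^T Q Z$, which is symmetric positive definite of size $n-q$ (hence invertible) by the hypothesis, since the columns of $Z$ lie in $\ker(B^T)$. The inertia of $\widetilde{\cal M}$ thus equals $(n-q,0,0)$ plus the inertia of the Schur complement
$$
S \;=\; \begin{bmatrix} S_{11} & M \\ M^T & 0 \end{bmatrix}, \qquad S_{11} \;:=\; Y^T Q Y - Y^T Q Z (Z^T Q Z)^{-1} Z^T Q Y .
$$

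Finally I would show that any symmetric matrix of the form of $S$ with $M$ square and invertible has inertia exactly $(q,q,0)$. The congruence by $\left[\begin{smallmatrix} I & 0 \\ -\tfrac{1}{2} M^{-1} S_{11} & I \end{smallmatrix}\right]$ annihilates the $(1,1)$ block and leaves $\left[\begin{smallmatrix} 0 & M \\ M^T & 0 \end{smallmatrix}\right]$; an SVD $M = U \Sigma V^T$ with $\Sigma = \mathrm{diag}(\sigma_1,\dots,\sigma_q) > 0$ followed by the congruence $\mathrm{diag}(U, V)$ reduces this further to $\left[\begin{smallmatrix} 0 & \Sigma \\ \Sigma & 0 \end{smallmatrix}\right]$, which splits into $q$ independent $2\times 2$ blocks $\left[\begin{smallmatrix} 0 & \sigma_i \\ \sigma_i & 0 \end{smallmatrix}\right]$ with eigenvalues $\pm \sigma_i$. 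Combined with the Haynsworth step, this gives total inertia $(n, q, 0)$, proving the proposition. The main obstacle I anticipate is the careful bookkeeping in the Schur complement step and pinning down that $M$ is \emph{square} and invertible --- a fact essential for producing exactly $q$ negative eigenvalues, which rests on both the full column rank of $B$ and the particular orthogonal splitting used.
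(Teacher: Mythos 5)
Your argument is correct, but there is nothing in the paper to compare it against: the authors state this proposition as a known result and simply cite Chabrillac--Crouzeix \cite{CC84}, giving no proof of their own. What you have written is a complete, self-contained derivation of the standard inertia result for saddle-point matrices: congruence to a basis adapted to $\mathrm{range}(B)\oplus\ker(B^T)$, Haynsworth additivity anchored on the positive definite block $Z^TQZ$, and the elementary computation that $\left[\begin{smallmatrix}S_{11} & M\\ M^T & 0\end{smallmatrix}\right]$ with $M$ square invertible has inertia $(q,q,0)$ -- each congruence step checks out (in particular the $-\tfrac12 M^{-1}S_{11}$ block does annihilate the $(1,1)$ entry because $S_{11}$ is symmetric). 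The one point worth flagging is the hypothesis you correctly identify as essential but which is absent from the proposition as literally stated: that $B$ has full column rank $q$. This is not cosmetic -- if $By=0$ for some $y\neq 0$ then $[0;y]$ lies in the kernel of ${\cal M}$, so ${\cal M}$ is singular and cannot have $n+q$ nonzero eigenvalues; the conclusion is simply false for rank-deficient $B$. The paper assumes full column rank elsewhere (e.g.\ Corollary~\ref{cor:skelton}) and clearly intends it here, but your proof makes explicit where it enters (the invertibility of $M=Y^TB$), which is a genuine improvement in precision over the bare citation.
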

\vskip 0.1in
We can state the existence result of a dissipating feedback matrix $K$ by using
a quite different proof, which sheds light into different properties of the matrix $K$.
In particular, similarities with the solution matrix of the Riccati equations can
be readily observed; see, e.g., \cite{S16} and references therein.

\vskip 0.1in
\begin{theorem}\label{th:exist}
The matrix $A+A^T$ is negative definite on the kernel of $B^T$ if and only if
there exists a matrix $K\in\RR^{q\times n}$ such that $W(A-BK) \subset {\mathbb C}^-$.
\end{theorem}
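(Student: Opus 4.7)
The plan is to prove the equivalence by pairing a one-line necessity argument with a constructive sufficiency proof based on Proposition~\ref{prop:spd} together with an invariant-subspace representation of $K$.

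For the necessity direction, suppose $W(A-BK)\subset\CC^-$, equivalently $\sym(A-BK)<0$. For every nonzero $x\in\ker B^T$ one has $x^T B=0$, hence $x^T BK x = x^T K^T B^T x = 0$; therefore
$$
x^T(A+A^T)x \;=\; 2\,x^T\sym(A-BK)\,x \;<\; 0,
$$
proving that $A+A^T$ is negative definite on $\ker B^T$. This is essentially immediate and needs no further machinery.

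For sufficiency I would reformulate the task as an invariant-subspace problem for the saddle-point matrix $\mathcal{M}$ in (\ref{eqn:M}). The central identity is
$$
\begin{bmatrix} I \\ K \end{bmatrix}^T \mathcal{M} \begin{bmatrix} I \\ K \end{bmatrix}
\;=\; -(A+A^T) + BK + K^T B^T \;=\; -2\,\sym(A-BK),
$$
so producing a dissipating $K$ is equivalent to producing an $n$-dimensional subspace $\mathcal{V}\subset\RR^{n+q}$ on which $\mathcal{M}$ is positive definite and which admits a graph parametrization $\mathcal{V}={\rm range}\begin{bmatrix} I \\ K \end{bmatrix}$. Under the hypothesis, Proposition~\ref{prop:spd} (applied with $Q=-(A+A^T)$) guarantees that $\mathcal{M}$ has exactly $n$ positive eigenvalues, so the span of the corresponding eigenvectors is a natural candidate for $\mathcal{V}$.

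The step requiring the most care is verifying that such $\mathcal{V}$ actually admits a graph representation, namely $\mathcal{V}\cap(\{0\}\times\RR^q)=\{0\}$. The essential observation is that the $(2,2)$ block of $\mathcal{M}$ is zero, so the quadratic form $v\mapsto v^T\mathcal{M} v$ vanishes identically on every vector of the form $v=\begin{bmatrix}0\\y\end{bmatrix}$; any nonzero such $v$ lying in $\mathcal{V}$ would contradict positive definiteness of $\mathcal{M}$ on $\mathcal{V}$. Writing a basis of $\mathcal{V}$ as the columns of $\begin{bmatrix} V_1 \\ V_2 \end{bmatrix}$, this forces $V_1\in\RR^{n\times n}$ to be nonsingular, and $K:= V_2 V_1^{-1}$ then reparametrizes $\mathcal{V}$ as required. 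The central identity immediately yields $\sym(A-BK)<0$, completing the proof. The remaining passages, namely the application of Proposition~\ref{prop:spd}, the invertibility of $V_1$, and the reconstruction of $K$, are then routine linear algebra.
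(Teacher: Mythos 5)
Your proposal is correct and follows essentially the same route as the paper: both directions coincide, with the sufficiency argument built on Proposition~\ref{prop:spd} and the graph parametrization $K=YX^{-1}$ of the $n$-dimensional positive invariant subspace of $\mathcal{M}$. The only minor divergence is the justification that the top block $X$ is invertible: the paper deduces it from the nonsingularity of $S=X^T(-AX+BY)$, whose symmetric part equals $\Lambda>0$, whereas you observe that a nonzero vector of the form $[0;y]$ in the subspace would annihilate the quadratic form of $\mathcal{M}$ (its $(2,2)$ block being zero), contradicting positive definiteness on that subspace --- both arguments are valid.
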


\begin{proof}
We first prove that if the condition on $A+A^T$ holds, then
there exists a matrix $K$ such that $W(A-BK) \subset {\mathbb C}^-$.

Proving that $W(A-BK) \subset {\mathbb C}^-$ for some $K$ corresponds
to stating that the symmetric matrix $(A-BK)+(A-BK)^T$  is
negative definite.
We can write
$$
(A-BK)+(A-BK)^T = (A+A^T) - BK - K^T B^T = 
-[I, K^T] \begin{bmatrix} -(A+A^T) & B \\ B^T & 0 \end{bmatrix}
 \begin{bmatrix} I\\ K \end{bmatrix} .
$$
Therefore, if $K$ is chosen so that the matrix $\cal M$ in (\ref{eqn:M})
is positive definite onto the space spanned by the columns of $[I;K]$, then
$(A-BK)+(A-BK)^T$ is negative definite.
Using Proposition \ref{prop:spd} it is possible to determine
an invariant subspace of ${\cal M}$ corresponding to the $n$ positive 
eigenvalues of ${\cal M}$. We next show that this gives the sought 
after matrix $K$.
Let the orthonormal columns $[X;Y]$ span
this invariant subspace, with $X\in\RR^{n\times n}$ and $Y\in\RR^{q\times n}$. 
Then we have
\begin{eqnarray}\label{eqn:eigM}
{\cal M} \begin{bmatrix} X\\Y\end{bmatrix} =
 \begin{bmatrix} X\\Y\end{bmatrix} \Lambda 
\end{eqnarray}
with $\Lambda$ diagonal and positive definite.
Moreover, multiplying from the left by $[X^T, Y^T]$ we can write
\begin{eqnarray}
\Lambda &= & [X^T,Y^T]{\cal M} \begin{bmatrix} X\\Y\end{bmatrix} = 
-X^T(A+A^T)X + X^T BY + Y^T B^T X \label{eqn:rel}\\
&=& X^T (-AX+BY) + (-X^TA^T+YB^T) X  =: S + S^T, \nonumber
\end{eqnarray}
where $S=X^T(-AX+BY)$. Since $\Lambda$ is positive definite, we
have that $S+S^T$ is also positive definite, that is the
field of values of $S$ is all in the positive right half  open complex
plane. In particular, this implies that $S$ is nonsingular, and
thus $X$ is nonsingular. 
Therefore we can define $K := YX^{-1}$. Then collecting $X$ and $X^T$
on both sides of the right-most expression in (\ref{eqn:rel}),
\begin{eqnarray*}
0 < \Lambda &=& X^T( -(A+A^T) +  BYX^{-1} + X^{-T}Y^T B^T) X \\
&= &
 X^T( -(A+A^T) +  BK + K^T B^T) X  
= 
X^T [ I, K^T] {\cal M} \begin{bmatrix} I\\K\end{bmatrix} X. 
\end{eqnarray*}
Since the eigenvalues of the two congruent matrices 
$$
X^T [ I, K^T] {\cal M} \begin{bmatrix} I\\K\end{bmatrix} X \quad {\rm  and}
\quad [ I, K^T] {\cal M} \begin{bmatrix} I\\K\end{bmatrix}
$$
 have the same sign, this implies that
$[ I, K^T] {\cal M} \begin{bmatrix} I\\K\end{bmatrix}$ is also positive definite.

\vskip 2mm
We finally prove the converse by negating that $-(A+A^T)$ is positive definite on
$\ker(B^T)$.
Suppose then that there exists $x \in \ker(B^T)$ such that $x^T (A+A^T) x \ge 0$,
then we have
\[
x^T ( (A+A^T ) - B K - K^T B^T ) x = x^T (A+A^T) x \ge 0 ,
\]
which means that $W(A-BK) \not\subset {\mathbb C}^-$
independently of $K$, completing the proof.
\end{proof}

The proof in constructive, since it determines one such $K$ explicitly.
Indeed, for small matrices 
a dissipating feedback 
matrix $K$ can be computed by first determining the eigenvector matrix $[X;Y]$
corresponding to all positive eigenvalues of ${\cal M}$, and then setting
$K= Y X^{-1} \in\RR^{q\times n}$.
%


\begin{remark}
{\it 
From its construction, it follows that $K=YX^{-1}$ is full (row) rank, equal
to $q$.
Indeed, we first notice that rank($K$) = rank($Y$). Moreover,
the second block row of (\ref{eqn:eigM}) yields $B^T X = Y \Lambda$.
Since both $\Lambda$ and $X$ are square and full rank, we obtain
rank($Y$) = rank($B$).
}
\end{remark}

\begin{remark}
{\it 
From the previous remark it also follows that since $B^T X \Lambda^{-1} = Y$ and
$X$ is nonsingular, we have $K=YX^{-1} = B^T X \Lambda^{-1} X^{-1}$, that is,
$K$ can be written as $K=B^T W$ for some nonsingular matrix $W$. Other strategies
discussed in the following will also determine a similar form, but with possibly 
singular $W$.
}
\end{remark}

\subsection{New parametrizations of dissipating matrices}
The parametrization in Corollary \ref{cor:skelton} depends on two matrices,
$R$ and $L$, giving at most $q(q+1)/2+nq$ degrees of freedom. However, by generalizing the
setting of our
Theorem \ref{th:exist}, we can see that dissipating matrices can be parametrized by
a larger number of degrees of freedom, therefore many more such matrices can be
defined than those introduced in Corollary \ref{cor:skelton}.

By generalizing the representation of Theorem \ref{th:exist}, we next present
two
different parametrizations of the possible families of dissipating feedback matrices.


\vskip 0.1in
\begin{proposition}\label{prop:nonunique}
Assume that the condition of Theorem \ref{th:exist} holds.
Let ${\cal M}=Q {\rm blkdiag}(\Lambda_+$, $\Lambda_-) Q^T$ be the eigendecomposition of
${\cal M}$, where $\Lambda_+$ ($\Lambda_-$) is diagonal with all the $n$ positive ($q$ negative)
eigenvalues of ${\cal M}$. 
Partition further $Q =[ Q_{11}, Q_{12}; Q_{21}, Q_{22}]$ with $Q_{11}\in \RR^{n\times n}$ nonsingular.
Then for any $H_2\in\RR^{q\times n}$ such that 
$\alpha=\|H_2(I-Q_{12}H_2)^{-1}Q_{11}\|^2$ satisfies
$\min \lambda(\Lambda_+) > \alpha \max \lambda(|\Lambda_-|)$, the feedback matrix
$$
K = Q_{21} Q_{11}^{-1} + (Q_{22}-Q_{21}Q_{11}^{-1} Q_{12}) H_2 ,
$$
is dissipating.
\end{proposition}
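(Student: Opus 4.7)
The plan is to generalize the argument from Theorem \ref{th:exist}, where we used the positive invariant subspace of ${\cal M}$ directly, by allowing a controlled admixture of the negative invariant subspace. The starting observation is that, since $Q$ is orthogonal, any block column $[I;K]\in\RR^{(n+q)\times n}$ can be represented as
$$
\begin{bmatrix}I\\ K\end{bmatrix}
= Q\begin{bmatrix} H_1\\ H_2\end{bmatrix}
= \begin{bmatrix} Q_{11}H_1+Q_{12}H_2\\ Q_{21}H_1+Q_{22}H_2\end{bmatrix}
$$
for suitable $H_1,H_2$. Matching the first block gives $H_1=Q_{11}^{-1}(I-Q_{12}H_2)$, and substituting in the second block yields exactly the stated $K$. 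Thus the formula in the proposition is nothing other than the $K$ associated with a free choice of $H_2$.

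Next I would compute the congruence that enters the dissipativity test. Recalling from the proof of Theorem \ref{th:exist} that it suffices to show
$[I,K^T]\,{\cal M}\,[I;K]>0$, I substitute $Q^T{\cal M} Q={\rm blkdiag}(\Lambda_+,\Lambda_-)$ to obtain
$$
[I,K^T]\,{\cal M}\,\begin{bmatrix}I\\K\end{bmatrix}
= [H_1^T,H_2^T]\begin{bmatrix}\Lambda_+ & 0\\ 0 & \Lambda_-\end{bmatrix}\begin{bmatrix}H_1\\H_2\end{bmatrix}
= H_1^T\Lambda_+ H_1 + H_2^T\Lambda_- H_2 .
$$
This decouples the effect of $H_1$ (positive contribution) and $H_2$ (negative contribution), reducing the problem to bounding the negative term by the positive one.

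For any vector $v\ne 0$, I would estimate
$v^T H_1^T\Lambda_+ H_1 v\ge \lambda_{\min}(\Lambda_+)\,\|H_1v\|^2$ and
$-v^T H_2^T\Lambda_- H_2 v\le \lambda_{\max}(|\Lambda_-|)\,\|H_2v\|^2$. The existence of $\alpha$ implicitly assumes $I-Q_{12}H_2$ is invertible, so $H_1=Q_{11}^{-1}(I-Q_{12}H_2)$ is invertible and I can write $v=(I-Q_{12}H_2)^{-1}Q_{11}\,H_1v$, giving
$$
\|H_2 v\|^2 \le \bigl\|H_2(I-Q_{12}H_2)^{-1}Q_{11}\bigr\|^2\|H_1v\|^2 = \alpha\,\|H_1v\|^2.
$$
Combining these bounds, positive definiteness of $H_1^T\Lambda_+H_1+H_2^T\Lambda_- H_2$ follows as soon as $\lambda_{\min}(\Lambda_+)>\alpha\,\lambda_{\max}(|\Lambda_-|)$, which is the hypothesis.

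Finally, with $[I,K^T]{\cal M}[I;K]>0$ established, the identity
$-(A-BK)-(A-BK)^T=[I,K^T]{\cal M}[I;K]$
from the proof of Theorem \ref{th:exist} yields that $(A-BK)+(A-BK)^T<0$, i.e.\ $W(A-BK)\subset\CC^-$. The main technical point, and the step I expect to be the most delicate, is handling the invertibility of $I-Q_{12}H_2$ and extracting the sharp operator-norm bound that produces precisely the scalar $\alpha$; once that is in place, the rest is the diagonalization argument above.
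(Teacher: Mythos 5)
Your proposal is correct and follows essentially the same route as the paper's own proof in Appendix~A: representing $[I;K]=Q[H_1;H_2]$, solving $H_1=Q_{11}^{-1}(I-Q_{12}H_2)$ from the first block, diagonalizing $[I,K^T]{\cal M}[I;K]=H_1^T\Lambda_+H_1+H_2^T\Lambda_-H_2$, and bounding the negative part via $\alpha=\|H_2H_1^{-1}\|^2$. The only cosmetic difference is that the paper first works with a general $Z=Q_1H_1+Q_2H_2$ and imposes the structure $Z=[I;K]$ afterwards, whereas you impose it from the start.
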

\vskip 0.1in

{The proof is postponed to the appendix.}

Proposition~\ref{prop:nonunique} shows that as long as it is possible to
separate the negative and positive eigenvalues of $\cal M$, a different matrix $K$ can
be obtained. Different values of $\alpha$ yield different values of $\|K\|$.

The result of Theorem \ref{th:exist} corresponds to using the limiting case
 $\alpha =0$ in Proposition \ref{prop:nonunique}, that is $H_2 = 0$ in the
definition of $Z$ in the proposition proof. This way, $K$ is well defined
as long as $\lambda_{\min}^+>0$, that is as long as
$\cal M$ has $n$ strictly positive eigenvalues, as indeed shown by Theorem \ref{th:exist}.
Indeed, the expression 
$K = Q_{21} Q_{11}^{-1} + (Q_{22}-Q_{21}Q_{11}^{-1} Q_{12}) H_2$ parametrizes
$K$ in terms of some matrix $H_2$ with the required conditions.
This parametrization may be used for determining the feedback matrix $K$ having
certain properties, such as  minimum Frobenius norm, see section~\ref{sec:min}.
%
Due to the low number of degrees of freedom, however, this parametrization is
unlikely to cover all possible feedback matrices $K$. This concern was
confirmed by some of our uumerical experiments, which showed that this procedure usually
determines a local minimum, which does not seem to be the global one. 

The next proposition provides another, more general parametrization for the set of dissipating feedback matrices,
by means of a pencil $({\cal M}, {\cal D})$, where $\cal D$ is a symmetric positive definite
matrix playing the role of the parametr. In particular, this means that 
at least $(n+q)(n+q-1)/2$ degrees of freedom
are available for the family of dissipating matrices.

\vskip 0.1in
\begin{theorem} \label{prop:D}
There exists a matrix $K$ such that $W(A-BK) \subset \CC^{-}$ if and only if
the pencil $({\cal M}, {\cal D})$ admits $n$ positive eigenvalues for some 
symmetric and positive definite matrix
 ${\cal D}\in\RR^{(n+q)\times (n+q)}$.
\end{theorem}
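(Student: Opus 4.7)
The plan is to adapt the constructive proof of Theorem \ref{th:exist} to the generalized eigenproblem $({\cal M},{\cal D})$: instead of using a standard orthonormal invariant basis of $\cal M$, I would work with a $\cal D$-orthonormal invariant basis of the pencil, and verify that the feedback matrix extracted from it still makes $W(A-BK)\subset\CC^-$. Both directions then reduce to exactly the same kind of congruence argument used in Theorem \ref{th:exist}, with $\cal D$ playing the role of an arbitrary symmetric positive definite scaling.

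For the only if direction I would simply take ${\cal D}=I_{n+q}$: the existence of $K$ forces $-(A+A^T)$ to be positive definite on $\ker(B^T)$ by Theorem \ref{th:exist}, Proposition \ref{prop:spd} then provides $n$ positive eigenvalues of $\cal M$, and these coincide with the positive generalized eigenvalues of $({\cal M},I)$. For the if direction, assume $({\cal M},{\cal D})$ admits $n$ positive generalized eigenvalues for some symmetric positive definite $\cal D$, and let the $\cal D$-orthonormal columns $[X;Y]$ with $X\in\RR^{n\times n}$ and $Y\in\RR^{q\times n}$ span the corresponding $n$-dimensional invariant subspace, so that ${\cal M}[X;Y]={\cal D}[X;Y]\Lambda$ with $\Lambda$ diagonal and positive and $[X^T,Y^T]{\cal D}[X;Y]=I_n$. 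Multiplying on the left by $[X^T,Y^T]$ yields $[X^T,Y^T]{\cal M}[X;Y]=\Lambda$, which, as in the proof of Theorem \ref{th:exist}, I would rewrite as $S+S^T$ with $S=X^T(-AX+BY)$; positive definiteness of $\Lambda$ forces $S$, and hence $X$, to be nonsingular. Setting $K:=YX^{-1}$, the identity $[X;Y]=[I;K]X$ produces the congruence $X^T[I,K^T]{\cal M}[I;K]X = [X^T,Y^T]{\cal M}[X;Y] = \Lambda > 0$, whence $[I,K^T]{\cal M}[I;K]>0$, which is equivalent to $W(A-BK)\subset\CC^-$.

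The one step that requires genuine attention is the nonsingularity of $X$ in the $\cal D$-scaled setting, but the argument from Theorem \ref{th:exist} transfers verbatim because it depends only on $\Lambda>0$ together with the identity $[X^T,Y^T]{\cal M}[X;Y]=\Lambda$, which are insensitive to the particular $\cal D$ chosen. As a byproduct, the construction exposes $\cal D$ as a parameter that selects, among the admissible invariant subspaces yielding a positive $\Lambda$, a specific one; this is what accounts for the much richer family of dissipating matrices advertised after the statement and indicates why the parametrization in Corollary \ref{cor:skelton} cannot be exhaustive.
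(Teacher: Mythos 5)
Your proof is correct. The ``if'' direction (from the pencil to $K$) is essentially the paper's own argument: a ${\cal D}$-orthonormal eigenbasis $[X;Y]$ of the positive part of the pencil gives $[X^T,Y^T]{\cal M}[X;Y]=\Lambda>0$, the $S+S^T$ argument borrowed from Theorem \ref{th:exist} yields nonsingularity of $X$, and $K=YX^{-1}$ works by congruence. Where you genuinely diverge is the ``only if'' direction. You reduce it to the trivial witness ${\cal D}=I$: existence of $K$ gives the kernel condition via the converse part of Theorem \ref{th:exist}, Proposition \ref{prop:spd} then gives $n$ positive eigenvalues of ${\cal M}$, and $({\cal M},I)$ has the same spectrum as ${\cal M}$. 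This is logically sufficient for the stated equivalence and is shorter. The paper instead constructs a nontrivial ${\cal D}$ adapted to the given $K$, namely ${\cal D}=UDU^T+U_\perp D_\perp U_\perp^T$ with $U=[I;K]$, $D=(U^TU)^{-2}$, $U_\perp^TU=0$ and $D_\perp$ an arbitrary symmetric positive definite matrix, so that $U^T{\cal D}U=I$ and the Rayleigh quotient of the pencil is positive on the $n$-dimensional subspace ${\rm range}(U)$. What that construction buys is the link between a prescribed dissipating $K$ and a whole family of admissible matrices ${\cal D}$ (parametrized by $D_\perp$ and the choice of $U_\perp$), which is what underpins the subsequent claim that ${\cal D}$ supplies at least $(n+q)(n+q-1)/2$ degrees of freedom for the parametrization; your ${\cal D}=I$ shortcut proves the equivalence but sheds no light on how a given $K$ is reached by varying ${\cal D}$.
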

\vskip 0.1in

\begin{proof}
We first recall that
the signature of the eigenvalues of $({\cal M}, {\cal D})$ is the same as that of ${\cal M}$ 
\cite[Theorem 5]{W73}.

Assume there exists ${\cal D}$ symmetric and positive definite such that
${\cal M} [X;Y] = {\cal D} [X;Y] \Lambda$ with $\Lambda>0$, with $[X;Y]$ ${\cal D}$-orthogonal.
Since $0 < \Lambda = [X^T, Y^T] {\cal M} [X;Y]$, proceeding as in the discussion after (\ref{eqn:rel})
the nonsingularity of $X$ is ensured.
Finally, setting $K:=YX^{-1}$,
\begin{eqnarray*}
[I, K^T] {\cal M} \begin{bmatrix}I \\ K \end{bmatrix} &=&
X^{-T} [X^T, Y^T] {\cal M} \begin{bmatrix}X \\ Y \end{bmatrix} X^{-1} \\
&=& 
X^{-T} [X^T, Y^T] {\cal D} \begin{bmatrix}X \\ Y \end{bmatrix} \Lambda X^{-1}  =
X^{-T}  \Lambda X^{-1}  > 0 .
\end{eqnarray*}

We next prove that if $K$ exists such that $W(A-BK) \subset \CC^{-}$,
then we can define a  
 symmetric and positive definite matrix ${\cal D}$.
Let $U=[I;K]$ and define 
$$
 {\cal D} = U D U^T + U_\perp D_\perp U_\perp^T,
$$
with $D=(U^T U)^{-2}$,
$[U, U_\perp]$ square and full rank with $U_\perp ^T  U=0$, 
 and for any symmetric and positive definite matrix $D_\perp \in\RR^{n\times n}$.
By construction we have  $U^T {\cal D} U = I$. 
We have thus found a subspace of dimension $n$, range$(U)$, such that, for any $0\ne x\in\RR^n$,
$$
\frac{x^T U^T {\cal M} U x}{x^T U^T {\cal D} U x} >0
$$
which implies that the pencil $({\cal M}, {\cal D})$ has at least $n$ positive eigenvalues.
\end{proof}

As opposed to the case ${\cal D}=I$, it does not seem to be possible to ensure that
$K$ has full rank, because $Y$ and $X$ depend on the matrix ${\cal D}$ to be
determined.

Note also that $\cal D$ may also be viewed as
the matrix defining a different inner product associated with the invariant subspace basis.

\begin{remark}\label{rem:scaling}
Since the matrix ${\cal D}$ is somewhat arbitrary, except for being symmetric and
positive definite, a block diagonal matrix could be considered.
On the other hand, this simplifying strategy would significantly decrease the
number of degrees of freedom, which play a role when looking for the minimal norm
feedback matrix, as discussed in the next section.
A similar drawback can be observed for the classical derivation highlighted in the second
part of section \ref{sec:classics}: indeed, in there,
a scaling with the free parameter matrix diag($\Phi^{\frac 1 2}, R^{-\frac 1 2})$ is performed,
but this may prevent the parametrized family from containing the matrix of minimal norm.
\end{remark}


\section{Computing a (weakly) dissipating feedback of minimal norm}\label{sec:min}
In this section we \bng address Problem P2 \eng and explore the possible computation of
a feedback matrix of minimal norm that makes the system either dissipative
or weakly dissipative. Let $\W^{q\times n} (A,B)$ be the set
of weakly dissipating matrices for the pair $(A,B)$.
The problem can thus be stated as: 

{\it Find $K_*\in\W^{q\times n}(A,B)$ such that}
\begin{eqnarray}\label{eqn:pbK}
\inf_{K\in\W^{q\times n}(A,B)} \| K \|_{\star} .
\end{eqnarray}
Here $\|\cdot\|_{\star}$ stands for the Frobenius norm ($\|\cdot\|_F$)
or the 2-norm ($\|\cdot\|_2$).  
 
The following result implies that the feedback matrix of minimal norm is 
to be found among the weakly dissipating matrices.

\begin{proposition} \label{prop:weak}
Assume that $W(A) \cap \CC^+ \neq \emptyset$ and let $K_1$ be a dissipating feedback matrix.
Then there exists a weakly dissipating feedback matrix $K_2$ with $\| K_2 \|_{\star} < \| K_1 \|_{\star}$.
\end{proposition}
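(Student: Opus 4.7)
The plan is to obtain $K_2$ as a simple scalar multiple of $K_1$, namely $K_2=t_* K_1$ for a suitable $t_* \in (0,1)$, so that the inequality $\|K_2\|_\star < \|K_1\|_\star$ is immediate from $\|K_2\|_\star = t_* \|K_1\|_\star$, independently of whether $\|\cdot\|_\star$ is the Frobenius or the spectral norm. The content of the proof then reduces to showing that the scaling parameter $t_*$ can be chosen so as to place the rightmost abscissa of $\sym(A-BK_2)$ exactly on the imaginary axis.

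To this end I would introduce the continuous real-valued function
\[
\phi(t) := \lambda_{\max}\bigl(\sym(A-tBK_1)\bigr),\qquad t\in[0,1],
\]
and exploit the affine dependence $\sym(A-tBK_1)=\sym(A)-t\,\sym(BK_1)$ together with the continuity of the largest eigenvalue of a symmetric matrix as a function of its entries. At $t=0$ the hypothesis $W(A)\cap \CC^+\neq\emptyset$, recalling that $W(A)\cap\RR$ is precisely the real interval $[\lambda_{\min}(\sym(A)),\lambda_{\max}(\sym(A))]$, gives $\phi(0)>0$. At $t=1$, since $K_1$ is (strongly) dissipating, $\sym(A-BK_1)$ is negative definite, so $\phi(1)<0$. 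By the intermediate value theorem, there exists $t_*\in(0,1)$ with $\phi(t_*)=0$.

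Set $K_2 := t_* K_1$. By construction $\sym(A-BK_2)$ is negative semidefinite with largest eigenvalue equal to zero, so $W(A-BK_2)\subset\bar{\CC}^-$. To verify the weak-dissipativity condition $W(A-BK_2)\cap\mathrm{i}\RR\neq\emptyset$, pick a unit real eigenvector $v$ of $\sym(A-BK_2)$ associated with the eigenvalue $0$; then $v^{\tp}(A-BK_2)v=v^{\tp}\sym(A-BK_2)v=0$, which together with the skew contribution shows that $v^{\tp}(A-BK_2)v$ is purely imaginary (in fact equal to $0$), so $0\in W(A-BK_2)$. Finally $\|K_2\|_\star=t_*\|K_1\|_\star<\|K_1\|_\star$ since $t_*<1$.

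The construction is straightforward; the only slightly subtle point is making sure that the weak dissipativity definition used in the paper (which requires the boundary of the field of values to actually touch the imaginary axis, not just lie in $\bar\CC^-$) is really satisfied. This is handled by the intermediate value construction, which forces $\lambda_{\max}(\sym(A-BK_2))=0$ rather than a strict inequality, and by observing that a real eigenvector of the symmetric part at eigenvalue $0$ produces a point of $W(A-BK_2)$ on the imaginary axis.
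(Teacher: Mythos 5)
Your proposal is correct and follows essentially the same route as the paper: both scale $K_1$ by a factor in $(0,1)$ (the paper writes $K(\rho)=(1-\rho)K_1$) and apply continuity of the numerical abscissa $\lambda_{\max}(\sym(A-tBK_1))$ together with the intermediate value theorem to land exactly on the imaginary axis. Your explicit verification that $0\in W(A-BK_2)$ via a null eigenvector of the symmetric part is a small additional detail the paper leaves implicit.
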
 
\begin{proof}
Let $K(\rho) = (1-\rho) K_1$, $0 \le \rho \le 1$. Naturally $W(A-B K(0)) = W(A - B K_1) \subset {\CC}^-$
and $W(A- B K(1)) = W(A) \not\subset \bar{\CC}^-$.

By continuity of eigenvalues of $S(\rho) = \sym(A -B K(\rho))$ we have that for sufficiently small $\rho>0$,
$\eta \left( \sym(A -B K(\rho)) \right) < 0$ and there exists $\rho = \rho_0 > 0$ such that $\eta \left( \sym(A -B K(\rho_0)) \right)=0$.
Setting $K_2 = (1-\rho_0) K_1$ determines a weakly-dissipating feedback $K_2$ with 
$\| K_2 \|_{\star} = (1-\rho_0)\,\| K_1 \|_{\star}$.  
\end{proof}

The following result is concerned with the existence of a weakly dissipating
minimizer for \eqref{eqn:pbK}.

\begin{proposition}\label{prop:wp}
Assume that $A +A ^T$ is negative definite on the kernel of $B^T$.
Then \eqref{eqn:pbK} is equivalent to
\begin{eqnarray}\label{eqn:pbKwk}
\min_{K\in\W^{q\times n} (A,B)} \| K \|_{\star} .
\end{eqnarray}
\end{proposition}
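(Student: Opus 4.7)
The goal is to show that the infimum in \eqref{eqn:pbK} is attained, so that it agrees with the minimum in \eqref{eqn:pbKwk}. The strategy is the standard direct method: show the feasible set is closed and nonempty, then reduce to a compact sublevel set via coercivity of the norm.

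First I would check that $\W^{q\times n}(A,B)$ is closed. Interpreting the set as $\{K\in\RR^{q\times n} : \sym(A-BK) \le 0\}$, it is the preimage of the closed cone of negative semidefinite symmetric matrices under the continuous affine map $K \mapsto \sym(A-BK)$, and is therefore closed. Next, the hypothesis that $A+A^T$ is negative definite on $\ker(B^T)$ lets me invoke Theorem \ref{th:exist} to produce a strictly dissipating $K_1$; in particular $\W^{q\times n}(A,B)$ is nonempty, and using Proposition \ref{prop:weak} one can actually pick a weakly dissipating $K_0$ with $\|K_0\|_\star \le \|K_1\|_\star$ if desired (though a plain $K_1$ is enough to proceed). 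Set $\gamma := \|K_0\|_\star < \infty$ and define
\[
\W_\gamma := \{ K \in \W^{q\times n}(A,B) : \|K\|_\star \le \gamma \}.
\]
Any infimizing sequence for \eqref{eqn:pbK} may be assumed to lie in $\W_\gamma$, which is closed (intersection of two closed sets) and bounded in the finite-dimensional space $\RR^{q\times n}$, hence compact.

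Since $K \mapsto \|K\|_\star$ is continuous (both for $\star=F$ and $\star=2$), it attains its minimum on the compact set $\W_\gamma$, and this minimum coincides with the infimum of $\|K\|_\star$ over $\W^{q\times n}(A,B)$. This proves \eqref{eqn:pbK} equals \eqref{eqn:pbKwk} and yields existence of a minimizer $K_*\in\W^{q\times n}(A,B)$. Finally, Proposition \ref{prop:weak} forces $K_*$ to be weakly (rather than strictly) dissipating, since otherwise one could decrease $\|K_*\|_\star$ while remaining in $\W^{q\times n}(A,B)$, contradicting minimality.

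The argument is essentially routine; the only mild subtlety is justifying that $\W^{q\times n}(A,B)$ is genuinely closed when one characterizes weak dissipativity through the field of values rather than through $\sym(A-BK)\le 0$. The equivalence of these two formulations for real data (and its immediate extension to complex data) makes the closedness claim unambiguous, so no real obstacle arises.
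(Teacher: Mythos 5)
Your proof is correct and follows essentially the same route as the paper: nonemptiness of $\W^{q\times n}(A,B)$ via Theorem \ref{th:exist} and Proposition \ref{prop:weak}, reduction to a compact sublevel set bounded by the norm of a known feasible point, and the Weierstra{\ss} theorem. Your explicit verification that the feasible set is closed (as the preimage of the negative semidefinite cone under a continuous affine map) is a detail the paper leaves implicit, but otherwise the arguments coincide.
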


\begin{proof}
Under the considered assumption, Theorem~\ref{th:exist} implies the existence
of a dissipating matrix $K_1$, then the set $\W^{q\times n} (A,B)$ is not empty.
Moreover, Proposition~\ref{prop:weak} implies the existence of a weakly dissipating 
matrix $K_2 \in \W^{q\times n} (A,B)$ with $\alpha := \| K_2 \|_{\star} \le \| K_1 \|_{\star}$.
Thus we can look for the solution to \eqref{eqn:pbK} in the bounded and closed 
(and thus compact) set 
$$\{ K \in\W^{q\times n}(A,B) \ {\rm s.t.} \ \| K \|_{\star} \le \alpha \}.$$
%
%
Since $\| \cdot \|_{\star}$ is a continuous function, the result follows from
Weierstra{\ss} Theorem.
\end{proof}

Note that in the case where one wishes to compute some strictly dissipating feedback
it would be sufficient to replace the matrix $A$ by  
$A_\delta := A + \delta I$, where $\delta$ represents the maximal real part of $W(A-B K)$.
Then applying the same procedure to the pair $\{A_\delta,B\}$ provides 
a strictly dissipating feedback.

Before we proceed with the actual computational strategies, we linger over some
spectral properties of the involved matrices. 

\begin{proposition}\label{prop:rankt}
Assume that $\sym(A )$ has $t$ positive eigenvalues with corresponding
eigenvectors $Q_-=[q_1, \ldots, q_t]$, and that $K\in\RR^{q\times n}$
is a dissipating feedback. Then it must be rank$(Q_-^T(B+K^T)) \ge t$.
\end{proposition}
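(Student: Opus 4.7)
The plan is to argue by contradiction. Suppose $\mathrm{rank}(Q_-^T(B+K^T)) < t$. Then there is a nonzero $z \in \RR^t$ such that $z^T Q_-^T(B+K^T) = 0$, equivalently $(B^T+K) Q_- z = 0$. Define $x := Q_- z \in \RR^n$; since $Q_-$ has orthonormal columns and $z \neq 0$, $x \neq 0$, and the key algebraic identity is
\[
K x = -B^T x .
\]
The goal from here is to show that this forces $x^T \sym(A-BK) x > 0$, which is incompatible with $K$ being dissipating.

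First I would compute $x^T \sym(A) x$. Since $x = Q_- z$ and $Q_-^T \sym(A) Q_- = \diag(\lambda_1,\ldots,\lambda_t)$ with each $\lambda_i > 0$, one gets $x^T \sym(A) x = \sum_{i=1}^t \lambda_i z_i^2 > 0$. Next I would compute $x^T \sym(BK) x$ using $Kx = -B^Tx$:
\[
2\, x^T \sym(BK) x = x^T B K x + x^T K^T B^T x = -\|B^T x\|^2 - \|K x\|^2 \le 0 .
\]
Combining the two,
\[
x^T \sym(A-BK) x = x^T \sym(A) x - x^T \sym(BK) x \;\ge\; x^T \sym(A) x \;>\; 0,
\]
which contradicts the dissipativity assumption $\sym(A-BK) \prec 0$. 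Therefore the rank must be at least $t$.

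Conceptually, the proof exploits the fact that any vector $x$ in the invariant subspace associated with the positive part of $\sym(A)$ must be ``reached'' by the combined action of $B^T$ and $K$; otherwise the feedback cannot overcome the intrinsic energy growth in that direction. I do not foresee a real obstacle: the only delicate point is the sign bookkeeping in the identity $\sym(BK) = \tfrac12(BK + K^T B^T)$ when $Kx = -B^T x$, which cleanly yields a nonpositive quadratic form and thereby the contradiction.
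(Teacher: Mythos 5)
Your proof is correct. It reaches the same conclusion through a more elementary route than the paper. The paper's Appendix B explicitly diagonalizes the matrix $\begin{bmatrix}0 & I\\ I & 0\end{bmatrix}$ to rewrite $BK+K^TB^T$ as a difference of Gram matrices $U_+U_+^T-U_-U_-^T$ with $U_\pm=\tfrac{1}{\sqrt2}(B\pm K^T)$, projects onto $Q_-$, and then argues that the sole positive semidefinite term $Q_-^TU_+U_+^TQ_-$ must have rank at least $t$ in order to push all $t$ eigenvalues of the negative definite remainder into the positive half-line. You skip the decomposition entirely: assuming rank deficiency, you produce a single test vector $x=Q_-z$ with $(B^T+K)x=0$, and the substitution $Kx=-B^Tx$ makes $x^T\sym(BK)x\le 0$ while $x^T\sym(A)x>0$, contradicting $\sym(A-BK)\prec 0$. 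This is in effect the contrapositive of the paper's final rank step, made fully explicit; it buys a shorter, self-contained argument that avoids the eigenvalue-displacement reasoning the paper leaves somewhat informal ("must move all $t$ eigenvalues\dots in particular, its rank must be at least $t$"), at the cost of not exhibiting the structural identity $BK+K^TB^T=\tfrac12\bigl[(B+K^T)(B+K^T)^T-(B-K^T)(B-K^T)^T\bigr]$, which is what motivates why the quantity $B+K^T$ appears in the statement in the first place. All your steps check out, including the orthonormality of $Q_-$ (eigenvectors of a symmetric matrix) used to guarantee $x\neq 0$.
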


\begin{proof}
See Appendix B.
\end{proof}

We next show that in correspondence to a weakly dissipating matrix $K$ there is
a nontrivial null space of $\sym(A -BK)$ of dimension at most $q$.

\begin{proposition}\label{prop:rankm}
Assume that $A +A ^T$ is negative definite on the kernel of $B^T$.
If $K$ is a weakly dissipating feedback 
then $\sym(A - BK)$ has a zero eigenvalue with multiplicity $m$, 
with $0<m\le q$.
\end{proposition}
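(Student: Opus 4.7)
My plan is to establish the two inequalities $m \ge 1$ and $m \le q$ separately. The lower bound is essentially immediate from the definition of weak dissipativity, while the upper bound rests on a dimension-counting argument combined with the hypothesis on $\ker(B^T)$.

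For the lower bound, recall that for a real matrix $M$ we have $\max\{\Re(z) : z\in W(M)\} = \lambda_{\max}(\sym(M))$. Weak dissipativity of $A-BK$ means $W(A-BK)\subset \bar{\CC}^-$ and $W(A-BK)\cap \i\RR\neq\emptyset$, so $\lambda_{\max}(\sym(A-BK))=0$. Hence $\sym(A-BK)$ is negative semidefinite with at least one zero eigenvalue, giving $m\ge 1$.

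For the upper bound $m\le q$ I would argue by contradiction. Suppose $m>q$ and let $N\in\RR^{n\times m}$ have orthonormal columns spanning $\ker(\sym(A-BK))$. The matrix $B^T N\in\RR^{q\times m}$ has more columns than rows, so there exists $0\ne x\in\RR^m$ with $B^T N x = 0$. Set $y:=Nx\ne 0$; then $y\in\ker(B^T)$ and $y\in\ker(\sym(A-BK))$. From the latter,
\[
0 \;=\; y^T\sym(A-BK)\,y \;=\; \tfrac{1}{2}\,y^T(A+A^T)y \;-\; \tfrac{1}{2}\,y^T(BK+K^T B^T)y .
\]
Since $B^T y=0$, both $y^T BK y = (B^T y)^T K y$ and $y^T K^T B^T y$ vanish, and hence $y^T(A+A^T)y=0$. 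But $y$ is a nonzero vector in $\ker(B^T)$, so the hypothesis that $A+A^T$ is negative definite on $\ker(B^T)$ yields $y^T(A+A^T)y<0$, a contradiction. Therefore $m\le q$.

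I do not foresee a real obstacle here: the whole argument is a two-line manipulation once the correct test vector $y=Nx$ has been produced. The only subtlety is the preliminary observation that weak dissipativity of a real matrix is equivalent to $\sym(A-BK)\preceq 0$ with a nontrivial kernel, which guarantees both that $m\ge 1$ and that we may take $N$ with orthonormal columns inside $\ker(\sym(A-BK))$.
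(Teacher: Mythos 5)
Your proof is correct and follows essentially the same route as the paper's: both arguments reduce to the observation that the terms $BK+K^TB^T$ contribute nothing to the quadratic form on $\ker(B^T)$, so the negative definiteness of $A+A^T$ there forces $\ker(\sym(A-BK))\cap\ker(B^T)=\{0\}$ and hence $m\le q$ (the paper packages this as a block congruence under a unitary $[B_0,N]$ and an explicit parameterization of the null space, while you phrase it as a dimension-count contradiction via $B^TN$). Your treatment of the lower bound $m\ge 1$ via $\lambda_{\max}(\sym(A-BK))=0$ is in fact slightly more explicit than the paper's, which takes $m>0$ directly from the definition of weak dissipativity.
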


\begin{proof}
Using Proposition \ref{prop:weak}, the hypothesis ensures that there
exists a weakly dissipating matrix $K$.
We only need to show that $\sym(A-BK)$ has at most $q$ zero eigenvalues.
Let $[B_0, N]$ be unitary, with Range$(B_0) = $Range$(B)$, so that Range$(N)$ is the null space of  $B^T$.
Let $K$ be weakly dissipating, so that
$( (A +A ^T) - BK - K^T B^T)$ has $m>0$ zero eigenvalues.
We can write
$$
[B_0, N]^T ( -(A +A ^T) + BK + K^T B^T) [B_0, N] =
\begin{bmatrix} 
S_{11} & S_{12} \\ S_{12}^T & S_{22}
\end{bmatrix} =: {\cal S},
$$
with $S_{22} \in \RR^{(n-q)\times (n-q)}$ and $S_{12} \in \RR^{q\times (n-q)}$.
By hypothesis it follows that $S_{22} = - N^T(A +A ^T)N >0$.
Let $[u;v]$ be a nonzero vector such that ${\cal S} [u;v]=[0;0]$. Then it must hold
that $v = - S_{22}^{-1} S_{12}^T u$ with $u\in\RR^{q}$. The eigenspace of ${\cal S}$
associated with the zero eigenvalue is thus spanned by the vectors
$[I_q; - S_{22}^{-1} S_{12}^T] u$, and there are thus at most $q$ of them, that are
linearly independent, that is there are $m\le q$ zero eigenvalues.
\end{proof}

%
%

\subsection{The LMI framework}
The problem (\ref{eqn:pbK}) can be stated as the following LMI optimization
problem. 
Following standard strategies (see, e.g., \cite{BEFB94}),
if the 2-norm is to be minimized, then the problem can be stated as
\begin{eqnarray}\label{eqn:LMI_2norm}
\min_{K\in\RR^{q\times n}} \|K\|_2 && \quad
{\rm subject \, \, to} \\
A+A^T  - B K  - K^T B^T \le 0, && \quad
\begin{bmatrix}\gamma I_q  & K \\ K^T & \gamma I_n \end{bmatrix} \ge 0 
\end{eqnarray}
where $\gamma>0$ is such that $\|K\| \le \gamma$. The problem is thus expressed
in terms of the two variables $K$ and $\gamma$, the first of which is a
rectangular matrix.

If the Frobenius norm is to be minimized, 
the problem becomes
\begin{eqnarray}\label{eqn:LMI_Fnorm}
\min_{K\in\RR^{q\times n}} \|K\|_F && \quad
{\rm subject \, \, to} \\
A+A^T  - B K  - K^T B^T \le 0, && \quad 
\begin{bmatrix}I & {\rm vec}(K) \\ {\rm vec}(K)^T & \gamma  \end{bmatrix} \ge 0 
\end{eqnarray}
where ${\rm vec}(K)$ stacks all columns of $K$ one after the other,
so that $\|K\|_F^2 \le \gamma$; see, e.g., \cite{D17}.

Both problems can be numerically
solved by using standard LMI packages. In our computational
experiments we used the Matlab version of Yalmip with the call to
either SeDuMi (see \cite{Sedumi}) or Mosek (see \cite{Mosek}). Some of these results
are reported in section~\ref{sec:expes}.

\subsection{A direct approach}

Using Theorem \ref{prop:D} we can compute the feedback matrix $K=YX^{-1}$ of minimal 
norm by solving the following optimization problem: 
\begin{equation}\label{Opt1}
\left\{
\begin{array}{c}
\inf\limits_{{\cal D} > 0} \| Y X^{-1} \|_F
 \\[0.25cm]
 { subject \ to}
\\  [0.3cm]
{\cal M} [X;Y] = {\cal D} [X;Y] \Lambda \quad \mbox{\it with} \quad \Lambda > 0. 
\end{array}
\right.
\end{equation}

This method has limitations when applied to problems of large dimensions,
that is when $n \gg 1$, moreover it seems to strongly depend on
the starting guess, as many local minima seem to exist.


\subsection{A gradient system approach} \label{sec:GL}
In this section we propose a gradient-flow differential equation approach that adapts 
to our setting a strategy first proposed in \cite{GL17}.
Given the matrix $\sym(A)$ and identifying its $m$ rightmost eigenvalues 
(e.g. its positive eigenvalues),
we construct a smoothly varying matrix $K$ that moves these eigenvalues to the origin,
so as to make the system weakly dissipative. We look for one such feedback matrix $K$ 
having minimum Frobenius norm.
%
%
We write $K=\eps E$ with $E$ of unit Frobenius norm, and with perturbation size $\eps>0$.
For a fixed $\eps>0$, we minimize the function
\begin{equation}
\label{F-eps}
F_\eps(E) = \frac12 \sum_{i=1}^m \Bigl( \lambda_i\left( \sym(A -\eps B E) \right)  \Bigr)^2 
\qquad\quad \hbox{constrained by }\|E\|_F=1,
\end{equation}
by solving numerically the corresponding gradient-flow differential equation. Here
$\lambda_i$s are the $m$ rightmost eigenvalues of the argument symmetric matrix.
We denote the obtained minimum by 
$E_\eps$ and then look for the smallest $\eps>0$ such that $F_\eps(E_\eps)=0$, which we denote by $\eps_m^*$.  
In general, the existence of $\eps_m^*$ is not guaranteed.
Formally, this can be expressed as: 

{\it Solve}
\begin{eqnarray}\label{eqn:minmin}
\min_{\eps>0} \min_{E\in\RR^{q\times n}\atop \|E\|_F=1} F_\eps(E) .
\end{eqnarray}
Clearly, the minimum of $F_\eps(E)$ is zero, that is with the optimal $K=\eps_m^* E$
the matrix $\sym(A  - BK)$ has $m$ coalescent eigenvalues.
 
Due to classical results on eigenvalue interlacing of low-rank modifications of symmetric matrices
\cite{HJ13},  
the number of positive eigenvalues of $\sym(A )$ provides a rigorous lower bound 
for rank$(K)$ in order to find an optimal weakly dissipating feedback.

The two-phase method works as follows.

{\it Inner procedure}. Assume $\eps>0$ is fixed.
Suppose that $E(t)$ is a smooth matrix-valued function of $t$ such that the $m$ largest eigenvalues 
of $\sym(A  - B \eps E(t))$, denoted by $\lambda_i(t)$ for $i=1,\dots,m$,
are simple with corresponding eigenvectors $x_i(t)$ normalized to have unit $2$-norm. 
Define $G(E)={}-\sum_{i=1}^m  \lambda_i z_i x_i^{T} = - Z D X^T$, with $z_i = B^{T} x_i$.
The steepest descent direction $\dot{E}$ for the functional $F_\eps(E)$ is obtained by solving
the gradient system (see \cite{GL17})
\begin{equation} \label{ode-E}
\dot{E} = -G(E) + \beta E, 
\qquad\hbox{with }\ \beta= \langle G(E),E\rangle .  
\end{equation}
Note that $G(E)$ is the free gradient matrix of $F_{\eps}(E)$.
Then the following result generalizes the corresponding theorem in \cite{GL17}.

\begin{theorem}\label{thm:stat}
The following statements are equivalent along solutions of  {\rm (\ref{ode-E})}, provided that the 
$m$ largest eigenvalues $\lambda_i$ of $\sym(A  - \eps B E(t))$ are simple and that
there exists at least an index $i\le m$ such that $\lambda_i \ne 0$.
\begin{enumerate}
\item $\frac{d}{dt} F_\eps\bigl(E(t)\bigr) = 0$.
\item $\dot E = 0$.
\item $E$ is a real multiple of $G(E)$. 
\end{enumerate}
\end{theorem}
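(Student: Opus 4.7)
The plan is to compute $\frac{d}{dt} F_\eps(E(t))$ along trajectories of \eqref{ode-E} and read off the three-way equivalence through a Cauchy--Schwarz argument. As a preliminary I would check that the flow preserves $\|E\|_F = 1$: differentiating and using $\beta = \langle G(E),E\rangle$ gives $\langle E,\dot E\rangle = -\beta + \beta\|E\|_F^2 = 0$, so the setting of simple eigenvalues with real unit-norm eigenvectors $x_i$ is consistent along the whole trajectory.

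Next I would apply the Rellich perturbation formula to each simple eigenvalue, $\dot\lambda_i = x_i^T\dot S\,x_i$ with $\dot S = -\eps\,\sym(B\dot E)$. Since the scalar $x_i^T B\dot E\,x_i$ equals its own transpose, one gets $x_i^T\sym(B\dot E)x_i = \langle z_i x_i^T, \dot E\rangle$, hence $\dot\lambda_i = -\eps\,\langle z_i x_i^T, \dot E\rangle$. Weighting by $\lambda_i$, summing, and recalling $G(E) = -\sum_{i=1}^m \lambda_i z_i x_i^T$ yields $\frac{d}{dt} F_\eps(E(t)) = \eps\,\langle G(E), \dot E\rangle$. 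Substituting the right-hand side of \eqref{ode-E} then produces the central identity
\[
\frac{d}{dt} F_\eps(E(t)) = -\eps\bigl(\|G(E)\|_F^2 - \langle G(E), E\rangle^2\bigr) .
\]

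From this identity the equivalences drop out. Since $\|E\|_F=1$, Cauchy--Schwarz bounds $\langle G(E),E\rangle^2 \le \|G(E)\|_F^2$ with equality if and only if $G(E)$ and $E$ are linearly dependent, yielding (1)$\Leftrightarrow$(3); and (2)$\Leftrightarrow$(3) is a direct reading of the ODE, since $\dot E = 0$ reads $G(E) = \beta E$, while conversely $E=c\,G(E)$ combined with $\|E\|_F=1$ forces $\beta E = G(E)$ and hence $\dot E=0$. The main subtlety I expect to spend time on, and the reason for the hypothesis that some $\lambda_i \neq 0$, is excluding the degenerate possibility $G(E)=0$: from $G(E)x_j = -\lambda_j z_j$ and the orthonormality of the $x_j$, having $G(E)=0$ together with some $\lambda_i \neq 0$ forces $B^T x_i = 0$, a non-generic configuration that needs to be ruled out for condition (3), read literally as ``$E$ is a real multiple of $G(E)$'', to coincide with the linear-dependence characterization coming out of Cauchy--Schwarz.
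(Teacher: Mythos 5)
Your proof is correct and takes essentially the same route as the paper, which itself does not spell out an argument but defers to Theorem 3.2 of the cited Guglielmi--Lubich reference: the identity $\frac{d}{dt}F_\eps(E(t)) = -\eps\left(\|G(E)\|_F^2 - \langle G(E),E\rangle^2\right)$ obtained from first-order perturbation of simple eigenvalues, followed by the equality case of Cauchy--Schwarz on the unit sphere. Your closing observation---that the hypothesis ``some $\lambda_i\ne 0$'' does not by itself exclude $G(E)=0$ when the corresponding $B^T x_i=0$, so that statement (3) read literally could fail while (1) and (2) hold---is a genuine fine point that the paper glosses over, and it is precisely the uncontrollable-eigenvalue configuration the authors themselves encounter later in their numerical experiments.
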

The proof follows the same lines as that of \cite[Theorem 3.2]{GL17}. 

{
Since the equilibrium of the ODE \eqref{ode-E} has rank-$m$, we proceed
similarly to \cite[equation (19)]{GL17}  and replace the matrix differential 
equation \eqref{ode-E} on $\RR^{q\times n}$ by a projected differential equation
onto the manifold of rank-$m$ matrices, so as to maintain the solution equilibria.
%
To preserve the projection property in the numerical treatment, 
we have considered a projected Euler method on the manifold  of rank-$m$
matrices (see, e.g., \cite[section IV.4]{HLW06}).
}

{\it Outer procedure.}
We let $E(\eps)$ of unit Frobenius norm be a local minimizer of the 
inner optimization problem in \eqref{eqn:minmin} and for $i=1,\dots,m$ we denote by 
$\lambda_i(\eps)$, $x_i(\eps)$ and $z_i(\eps)$ the corresponding largest eigenvalues, eigenvectors and
$z$-vectors of $\sym(A  - \eps B E(\eps))$. 
Finally we let $\eps_m^*$ be the smallest value of $\eps$ such that $F_\eps(E(\eps))=0$. 

To determine $\eps_m^*$, we are thus left with a one-dimensional root-finding problem, for 
which a variety of standard methods are available. Following \cite{GL17} in our
implementation we have used a Newton-like algorithm in the form
$$
\eps_{k+1} = \eps_{k} - \frac{f(\eps_k)}{f'(\eps_k)},
$$
where $f(\eps)=F_\eps(E(\eps))$ and ${\phantom{a}'}= d/d\eps$.
To use this iteration we need to impose the following
extra assumption, which is not restrictive in practice.

\begin{assumption}
For $\eps$ close to $\eps_m^*$ and $\eps<\eps_m^*$, 
we assume that the $m$ largest eigenvalues  of $\sym(A  - \eps B E(\eps))$  
are \emph{simple} eigenvalues. Consequently $E(\eps)$ and these eigenvalues 
are smooth functions of $\eps$, as well as the associated vectors $x_i(\eps), z_i(\eps)$.
\label{assumpt}
\end{assumption}

Then under Assumption~{\rm \ref{assumpt}} 
the function $f(\eps)$  
is differentiable and its derivative equals (see, \cite[Lemma 3.5]{GL17})
$$
f'(\eps) =  - \| G(\eps) \|_F.
$$ 
Since the eigenvalues are assumed to be simple,
the function $f(\eps)$ has a double zero at $\eps_m^*$ because it is a sum of squares, and hence
it is convex for $\eps \le \eps_m^*$.
This means that we may approach $\eps_m^*$ from the left by the classical Newton iteration,
which satisfies
$|\eps_{k+1} - \eps_m^*| \approx \frac12 |\eps_{k} - \eps_m^*|$ 
and $\eps_{k+1} < \eps_m^*$ if $\eps_k < \eps_m^*$. 
The convexity of the function to the left of $\eps_m^*$ guarantees the 
monotonicity of the sequence and its boundedness\footnote{A much more accurate approximation 
is obtained by the modified iteration
$\tilde\eps_{k+1} = \eps_{k} - 2{f(\eps_k)}/{f'(\eps_k)}$,
which is such that $|\tilde\eps_{k+1} - \eps_m^*| \approx {\rm const} |\eps_{k} - \eps_m^*|^2$; 
see \cite{GL17}.}.
We refer the reader to \cite{GL17} for full details.

{
\begin{remark}
Assume that for $\eps < \eps_m^*$, $\eps \rightarrow \eps_m^*$, $F_\eps(E(\eps)) \rightarrow 0$,
 and exactly $m$ eigenvalues of $\sym\left(A+\eps B E(\eps)\right)$ vanish.
Let $E_* = \lim\limits_{\eps \rightarrow \eps_m^*} E(\eps)$.
Then, exploiting Theorem \ref{thm:stat} and the rank-properties of $E(\eps)$, and passing 
to the limit it follows that $E_*$ has the form
$$
E_* = Z D X^{T}
$$
with $D$ a diagonal matrix and the orthonormal columns of
$X$  span the invariant space of $\sym(A -BK_*)$  associated with the
$m$ rightmost (zero) eigenvalues, and $Z=B^T X$. Therefore, $E_*= B^T X D X^T$ and
$K_* = \eps_m^* E_*$ has rank-$m$.
%
 
If instead $m' > m$ eigenvalues effectively vanish, and $m_+ \le q$ then $E_*$ has rank $m_+$.
\end{remark}


With a Frobenius norm minimizing feedback matrix $K_*=\varepsilon_m^* E_*$ we thus have that
the matrix $A-\varepsilon_m^* BB^T (XDX^T)$ provides a {\it dissipative} closed-loop  system. 
This reminds us of a corresponding property of
the solution $X_*$ to the Riccati equation, and in particular, that
$A-BB^T X_*$ is associated with a {\it stable} closed-loop system.

\subsection{A variant of the gradient system approach: a modified functional}
\label{sec:plus}

The proposed functional \eqref{F-eps} is not the only possible one. Here we shortly describe a
variant that has been shown to be more effective in our experiments. Note that the associated
gradient system has a very similar structure, although the gradient in this case is only continuous.

We use the notation $a^+ = \max\left\{a,0\right\}$. For a fixed $\eps>0$ we consider the 
minimization of the following function
\begin{equation}
\label{F-epsp}
F^+_\eps(E) = \frac12 \sum_{i=1}^m \Bigl( \lambda_i^+\left( \sym(A -\eps B E) \right)  \Bigr)^2 ,
\qquad\quad \hbox{constrained by }\|E\|_F=1.
\end{equation}
The free gradient is continuous and has the form
\begin{equation} 
G^+(E)={}-\sum_{i=1}^{m^+(E)}  \lambda_i z_i x_i^{T}, \qquad z_i = B^{T} x_i
\label{eq:gradp}
\end{equation}
where $m^+(E) \le m$ is the number of positive eigenvalues among the $m$ rightmost ones.
This means that negative eigenvalues (among the $m$ largest) do not contribute to the gradient
which has rank equal to $m^+$. This modified strategy, which we shall call GL($m$)+ in our
numerical experiments, is able to account for more strongly varying eigenvalues, that 
possibly cross the origin while converging to zero as the iterations proceed.

\begin{remark}
An important advantage of \eqref{F-epsp} is that it no longer depends on $m$,
but only on $m^+(E)$. In particular, 
if $m$ is larger than the number of positive eigenvalues of $\sym(A -\eps B E)$ 
during the whole optimization process, the method is expected to converge. 
This also means that whenever using GL($m$)+, 
by taking a sufficiently large $m$ we expect to obtain the same results, independently 
of $m$  (see Example \ref{ex:smallA_B3}). Only if $m$ is chosen smaller than the final number of 
eigenvalues coalescing to zero we should expect an incorrect behavior. If non-convergence
is observed, then one can readily increase the value of $m$.
\end{remark}

\section{Numerical experiments}\label{sec:expes}
In this section we report on some of our computational experiments for determining
the minimum norm feedback matrix. In particular, we analyze the
behavior of the different methods we have discussed, with special
emphasis on the minimization property, using both the Frobenius and the Euclidean norms.
In all examples, we checked a-priori that the system can be made dissipative, that is
Theorem \ref{th:exist} holds.

The methods we are going to investigate are summarized as follows:

\vskip 0.1in
\begin{center}
\begin{tabular}{ll}
Method & description \\
\hline
GL$(m)$     & two-step method of section \ref{sec:GL} with $m$ rightmost eigenvalues \\
LMI    & Matlab basic function for the LMI problem (\ref{eqn:LMI_2norm}) ({\tt mincx}) \\
Yalmip1 & Matlab version of Yalmip with SeDuMi solver for problem (\ref{eqn:LMI_2norm})\\
Yalmip2 & Matlab version of Yalmip with SeDuMi solver for problem (\ref{eqn:LMI_Fnorm})\\
Pencil  & minimization problem with pencil in (\ref{Opt1}) \\
\hline
\end{tabular}
\end{center}
\vskip 0.1in

\begin{example}\label{ex:smallA_B2}
{\rm
We consider the following small data set
\begin{equation} \label{ex:ill1}
A = \begin{bmatrix}
   -0.2  &    1.6   &   0.2   &   2.6   &  -0.4 \\  
   -0.2  &   -0.8   &  -1.2   &  -0.7   &  -1.8 \\  
    1.4  &    0.7   &  -1.1   &   0.2   &   0.8 \\ 
    0.3  &    0.8   &   0.1   &  -0.1   &  -0.9 \\ 
    0.2  &   -0.2   &   0.7   &  -1.9   &   0.1  
\end{bmatrix}, \qquad
B = \begin{bmatrix}
    0.6  &    0.5 \\  
   -0.2  &    0.3 \\  
    0.5  &      0 \\
    0.2  &    0.6 \\ 
    0.6  &   -0.6  
\end{bmatrix} .
\end{equation}
The eigenvalues of the matrix $(A+A^T)/2$ 
are given by (with $4$ decimal digits) 
$\{-2.4752,\,  -1.8301,\,  -0.7238,\, 0.6506,\, 2.2785 \}$,
including two $2$ positive eigenvalues.
The performance of the considered methods is reported in Table \ref{tab:ex1_res}.

The GL method was used with $m=2$. The dissipating matrices for GL and Yalmip2
are, respectively
$$
K_{GL} =
 \begin{bmatrix}
   0.3690& -0.12149&  0.34503&  0.1119&  0.35065 \\
   1.0340&  0.66501& -0.01895&  1.3640& -1.2432 \\
 \end{bmatrix}
$$
and
$$
K_{Yalmip2} =
 \begin{bmatrix}
   0.3684 &-0.11954&  0.35079&  0.1097&  0.3467 \\
   1.0118 & 0.65736& -0.03002&  1.3995& -1.2240 \\
 \end{bmatrix}
$$
showing that the two matrices are not the same, even accounting for
numerical approximations. Similarly, for the eigenvalues of the
symmetric parts of the dissipative matrix we obtain
$$
\lambda_i({\sym}(A-B K_{GL})) \in \{
  -2.4765,\,  -1.8306,\,  -0.72468,\,  -2.4e-09,\,  -1.3e-08\} ,
$$
and
$$
\lambda_i({\sym}(A-B K_{Yalmip2})) \in \{
  -2.4743,\,  -1.8298,\,  -0.72353,\,  -2.4e-10,\,   5.0e-10\} .
$$
Notice that because of finite precision arithmetic - the quantities actually
minimized are the squares of the ones sought after -
neither method is able to force the two eigenvalues to zero to machine precision.
We also observe that
$$
\lambda_i({\sym}(A-B K_{Yalmip1})) \in \{
  -2.4742,\,  -1.8280,\,  -0.72428,\,  -0.69001,\,   2.9e-11\}
$$
that is, the minimization of the 2-norm correctly moves both
positive eigenvalues of $A+A^T$, but only one is moved to zero.
It is also interesting to notice that in all cases, the negative eigenvalues
of $(A+A^T)/2$ are barely moved.

\begin{table}
\centering
\begin{tabular}{|lcrr|}
Method & Minimization &  $\|K_*\|_2$ & $\|K_*\|_F$ \\
\hline
GL$(2)$     & F-norm       &   {\bf 2.2166} & {\bf 2.3063}  \\
LMI    &  2-norm      &   {\bf 2.2166} &   2.6714 \\
Yalmip1 &  2-norm     &   {\bf 2.2166} &   2.5765 \\
Yalmip2 &  F-norm     &   {\bf 2.2166} &   {\bf 2.3063} \\
Pencil  &  F-norm     &   2.2560  &  2.7585  \\
\hline
\end{tabular}
\caption{Example \ref{ex:smallA_B2}.\label{tab:ex1_res}}
\end{table}
 
Finally, {\color{black}the upper plot of} Figure \ref{fig:ex1} shows
the field of values $W(A-B K_{Yalmip2})$ or
$W(A-B K_{GL})$, as they are visibly indistinguishable.
{\color{black} The multiple zero eigenvalue of Sym($A-BK_{GL}$) causes
a flat portion of the right boundary of $W(A-B K_{GL})$. It can be shown that
the flat segment is given by $[-\imath \sigma, \imath \sigma]$, where
$\sigma$ is the spectral radius of $\skews(A-BK_{GL})$ restricted
to the kernel of $\sym(A-BK_{GL})$. We refer the
reader to, for instance, \cite{ERS12} and its
references for a more detailed account on flat portions on the
boundary of the field of values.
The lower plot of Figure \ref{fig:ex1} reports the field of value of
$A-B K_{Yalmip1}$: the simple zero eigenvalue of Sym($A-BK_{Yalmip1}$) determines
a more curved boundary on the right. }
$\square$
}
\end{example}

\begin{figure}[htb!]
\centering
\includegraphics[scale=0.66]{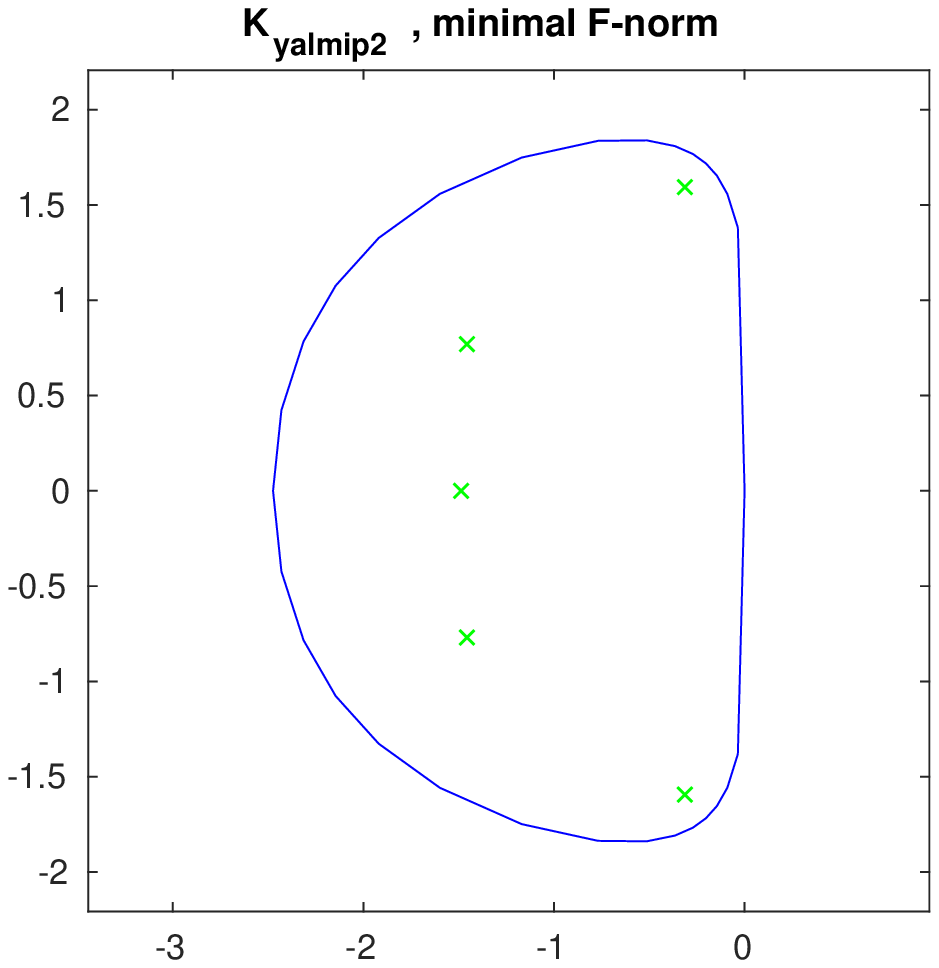} \vskip 5mm
\includegraphics[scale=0.66]{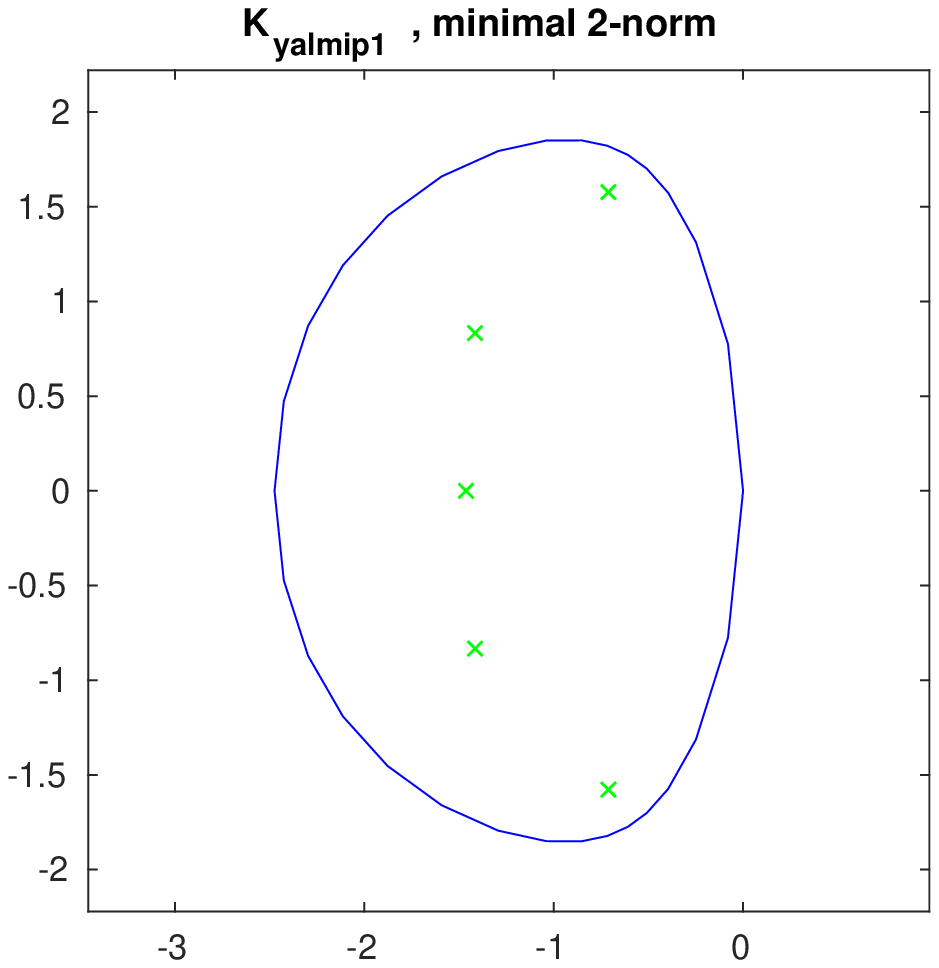}
\caption{Example \ref{ex:ill1}.  Upper: $W(A-B K_{GL})$.  Lower: $W(A-B K_{Yalmip1})$.  \label{fig:ex1}}
\end{figure}

{\color{black}
As a consequence of the discussion in the previous example, 
in the case one expects a weakly passivating feedback leading to
a multiple eigenvalue of $\sym(A-BK)$, it is useful 
{\color{black}to enforce passivity of the feedback system by (slightly) shifting to the left the right
boundary of $W(A-BK)$. {\color{black} This could be achieved, for instance, }
by solving the optimization problem P2
for the matrix $A+\delta \Id$ (instead of $A$) for a suitable small $\delta>0$.
This would provide a strictly dissipating feedback with the field of values $W(A-B K)$  
characterized by a flat right boundary along a line parallel to the imaginary axis, 
passing through the point $(-\delta,0)$.
}

For the sake of comparisons, in the following we shall focus only
on the two Frobenius norm minimizing methods.}

\begin{example}\label{ex:smallA_B3}
{\rm
Consider again the matrix $A$ of \eqref{ex:ill1} but consider now the augmented matrix $B$
\begin{equation}
B = \left( \begin{array}{rrr}
    0.6  &    0.5 & 1 \\  
   -0.2  &    0.3 & 0 \\  
    0.5  &      0 & 0 \\
    0.2  &    0.6 & 0 \\ 
    0.6  &   -0.6 & 0 
\end{array} \right).
\label{ex:ill1b}
\end{equation}

The results for this new $B$ are displayed in Table \ref{tab:ex2_res}, and
they are similar to those of the previous test, in spite of the larger $B$.
In this example, we also report on the behavior of GL for a different number
$m$ of eigenvalues to be moved to zero. For $m=2$ (the number of positive
eigenvalues of $(A+A^T)/2$) both norms are smaller than for $m=q=3$.
The results in Table \ref{tab:ex2_res} show that for GL is important to capture
the actual number of positive eigenvalues of $(A+A^T)/2$ to obtain a
close-to-optimal feedback matrix.
$\square$
}
\end{example}

\begin{table}
\centering
\begin{tabular}{|lcrr|}
Method & Minimization &  $\|K_*\|_2$ & $\|K_*\|_F$ \\
\hline
GL(2)     & F-norm    &    {\bf 2.0713}  &   {\bf 2.1476}  \\
GL(3)     & F-norm    &    2.3699        &   3.0638   \\
LMI    &  2-norm      &   {\bf 2.0705} &   2.5668 \\
Yalmip1 &  2-norm     &   {\bf 2.0705} &   2.3946 \\
Yalmip2 &  F-norm     &   {\bf 2.0713} &   {\bf 2.1476} \\
Pencil  &  F-norm     &   3.6459       &  3.9537  \\
\hline
\end{tabular}
\caption{Example \ref{ex:smallA_B3}. Here GL($m$) means that $m$ eigenvalues were
moved to zero in the minimization problem (\ref{eqn:minmin}). \label{tab:ex2_res}}
\end{table}

\begin{figure}[htb]
\centering
\includegraphics[scale=0.5]{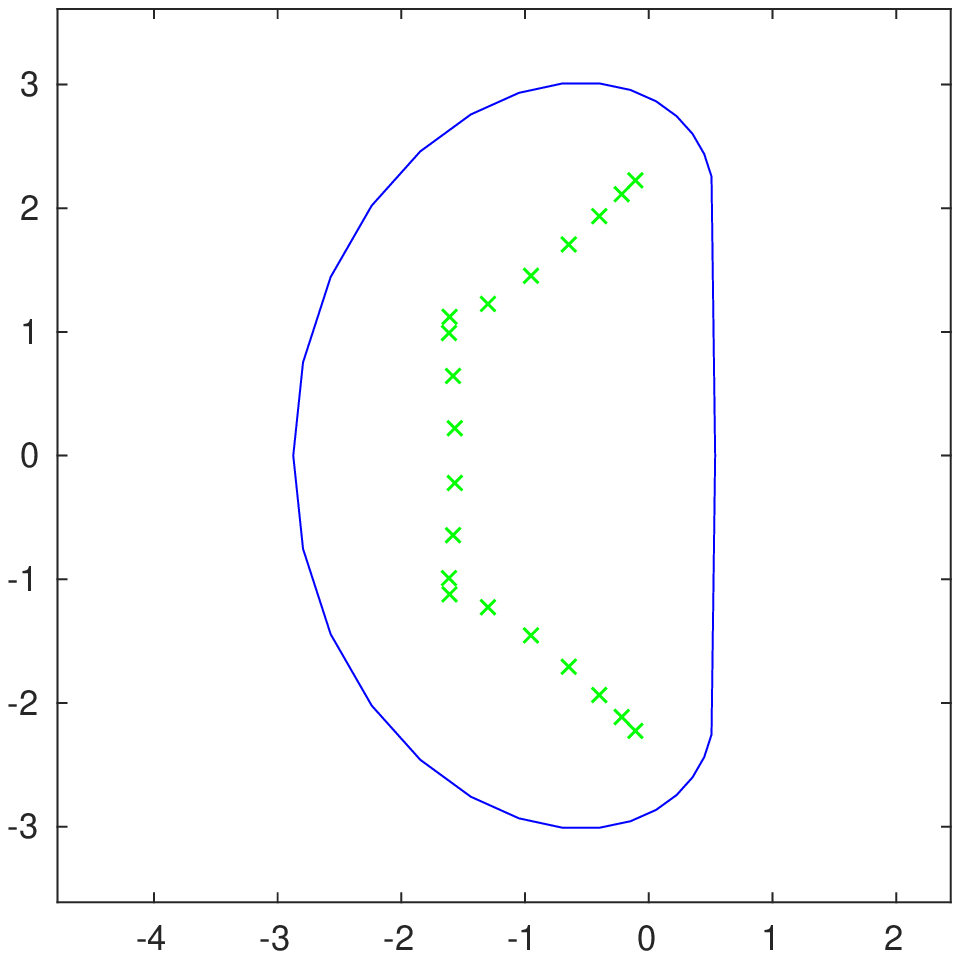}
\caption{Spectrum and field of values of the matrix Grcar matrix ($n=20$) in Example \ref{ex:grcar}. \label{fig:grcar}
}
\end{figure}

\begin{example}\label{ex:grcar}
{\rm
We consider the negative Grcar matrix of size $n$, defined as a Toeplitz banded matrix
with unit lower bandwidth of elements equal to minus one, and upper bandwidth three, given
by all ones. Its spectrum and field of values are
given in Figure \ref{fig:grcar} for $n=20$. The symmetric part of the
original matrix has a large number of positive eigenvalues, so that a shifting
procedure is adopted to have $m=O(1)$ positive eigenvalues.  To ensure that dissipation 
is feasible $B$ was selected as a linear combination of all eigenvectors corresponding
to positive eigenvalues of $(A+A^T)/2$, so that $q=m$. 

\bng
{\color{black}Although the considered matrix $B$ makes the problem strongly non-generic, it is illustrative}
of a situation where the GL($m$) method performs critically.
\eng 
The results of using GL($m$) and Yalmip2 are reported in Table \ref{tab:grcar},
as $n$ and the shift vary. The reported values show that the two methods approximately
return the same minimum, with Yalmip2 always being smaller. 
\bng Indeed the higher accuracy obtained by Yalmip is not unexpected since it makes use 
of a Newton method, {\color{black}whereas} GL($m$) is based on a gradient method.
\eng
It is interesting that
is some cases (incidentally corresponding to $m=4$) the discrepancy is slightly higher.
A closer look reveals that for these data the positive eigenvalues occur in pairs
of near eigenvalues. This seems to affect the performance of GL($m$). This anomalous,
though not fully unexpected behavior is explored in the next example. $\square$
}
\end{example}

\begin{table}[hbt]
\centering
\begin{tabular}{|lccll|} 
\hline
$A - 0.6I$ & $n$ & $m$  &    GL($m$) & Yalmip2 \\ 
\hline
& 50&   2    &       3.499028e-02   &        3.498990e-02 \\
&100&   4    &       7.339794e-01   &        7.291499e-01 \\
&150&   6    &       6.275579e-01   &        6.257247e-01 \\
&200&  10    &       2.448407e-01   &        2.448246e-01 \\
\hline
$A-0.62I$
&100&   2    &       2.181123e-02   &        2.181135e-02 \\
&150&   4    &       2.904286e-01   &        2.881408e-01 \\
\hline
$A-0.52I$
& 20&   2    &       1.627621e-02   &        1.627676e-02 \\
& 40&   3    &       3.019605e-01   &        3.019597e-01 \\
& 45&   4    &       1.931760       &        1.914460     \\
& 50&   4    &       2.275207       &        2.257378     \\
&100&   8    &       7.909541e-01   &        7.909255e-01 \\
&150&  13    &       6.278783e-01   &        6.278735e-01 \\
\hline
\end{tabular}
\caption{Example \ref{ex:grcar}. Minimum F-norm obtained by the considered methods,
as the Grcar matrix dimension varies, for different shifts. Here $q=m$. \label{tab:grcar}}
\end{table}

\begin{example}\label{ex:clusters}
{\rm
To deepen our understanding of the behavior of GL($m$) in case of positive clusters
we consider the following class of matrices
$$
{\cal A}=\frac 1 2 (A+A^T) = X \Lambda X^T, \quad \Lambda = 
{\rm diag}(\lambda_1, \ldots, \lambda_{n-q}, \eta_1, \ldots, \eta_q)
$$
where $\lambda_i$ are uniformly\footnote{Linear or logarithmic distributions yield similar results;
we used linearly distributed eigenvalues.} distributed eigenvalues in $[-10, -10^{-2}]$ while
$\eta_j \in \{1, 1+\delta, 2, 2+\delta, 3, 3+\delta\}$, taken in this order as $q$ varies,
so that positive clusters arise. $X$ is taken as a fixed orthonormal matrix, while
$\delta\in (0,1)$ varies, so as to increase the eigenvalue clustering.
The matrix $A$ is then obtained as the lower triangular part of ${\cal A}$, so that ${\cal A}=(A+A^T)/2$ holds.
The matrix size is $n=20$ throughout. The matrix $B$ was taken as in the previous
examples, so that $m=q$.

\begin{table}[hbt]
\centering
\begin{tabular}{|llcrr|}
\hline
 $m(=q)$  &   $\delta$ & GL($m$) &   GL($m$)+  & Yalmip2 \\
\hline
 6  &   0.00001  & - &  5.581468e+01    &    5.581342e+01  \\
 6  &   0.001    & - &  5.582551e+01    &    5.582426e+01  \\
 6  &   0.01     & - &  5.592403e+01    &    5.592278e+01  \\
 6  &   0.1      & - &  5.690962e+01    &    5.690837e+01  \\
 6  &   0.5      & 6.131648e+01 &  6.129619e+01    &    6.129493e+01  \\
 2  &   0.001    & 2.429389e+00 &  2.429389e+00    &    2.429388e+00  \\
 4  &   0.001    & - &  5.152558e+01    &    5.152495e+01  \\
 4  &   0.01     & - &  5.157901e+01    &    5.157837e+01  \\
 4  &   0.1      & - &  5.211364e+01    &    5.211302e+01  \\
 4  &   0.5      & - &  5.449942e+01    &    5.449883e+01  \\
\hline
\end{tabular}
\caption{Example \ref{ex:clusters}. Minimum F-norm obtained by the considered methods,
as the closeness and number of positive eigenvalues vary.
Here $m=q$. 
\label{tab:clusters}}
\end{table}
Table \ref{tab:clusters} shows the results of the considered methods, minimizing the Frobenius
norm. We vary both the number of positive eigenvalues of ${\cal A}$ and their closeness,
by tuning $\delta$.
We readily see that the LMI method Yalmip2 succeeds in determining the minimum, whereas
GL($m$) fails to converge in all but two cases, illustrating that the
method is indeed affected by this data setting.
The reason of this failure is that when (in the gradient dynamics)  the $m$-th largest
eigenvalue moves to the left of the uncontrollable eigenvalue $\lambda=-10^{-2}$ of $\sym(A)$
(the associated eigenvector $x$ is in fact such that $B^{\rm T} x = 0$), 
we have that $\lambda=-10^{-2}$ 
replaces such an eigenvalue in the functional and cannot be moved to $0$.  
\bng
Although this is a strongly non-generic case we can {\color{black} expect} 
that almost uncontrollable eigenvalues may slow down the speed of GL($m$). 
\eng




This problem can be effectively 
solved by the variant GL($m$)+ introduced in section~\ref{sec:plus};
Experiments with 
GL($m$)+ were thus included in Table~\ref{tab:clusters}.
We observe that this modification provided a dramatic improvement to the method, which converged to
practically the same value obtained with Yalmip2 in all cases. As this variant appears
to be new, its theoretical properties still need to be analyzed; we postpone this interesting
study to future research.  $\square$
}
\end{example}

Our experience on larger data showed that GL($m$) is 
faster than all LMI-based methods for medium to large values of $n$.
This is not unexpected, since the extremely high
computational cost is one of the known drawbacks of LMI-based algorithms. 
Although a CPU time comparison is not the focus of this paper, which would possibly require 
moving to compiled languages, we believe that there is enough numerical evidence to
encourage further exploration of GL($m$) and its variants towards an efficient
treatment of large scale problems.

%
\section{Conclusions}
Passivating matrices are of interest for open-loop dynamical systems and
have thus been analyzed in the Control literature. We have shown that
their classical parametrization may not include all possible such matrices,
and we have provided richer parametrization sets.

The problem of determining the norm minimizing dissipating feedback matrix can
be formulated as a linear matrix inequality problem, and thus solved
with well established software in the small size case.
We also explored a variant of a recently developed functional minimization
method, GL($m$), that appears to be able to determine the solution at a comparable
accuracy, with possibly lower computational efforts on
medium and large size problems.
In spite of these encouraging results, 
our numerical experiments also show that this new strategy requires further
theoretical and experimental investigations to be considered as an
effective viable alternative to LMI methods, and this will be the
topic of our future research.

\section*{Acknowledgments}
We thank Christopher Beattie (Virginia Tech) and Volker Mehrmann (TU Berlin) for
insightful conversations on topics related to this work.
We also thank the Italian INdAM GNCS for support. 

\section*{Appendix A}
In this Appendix we include the proof of Proposition \ref{prop:nonunique}.

\begin{proof}
Let us partition $Q=[Q_1, Q_2]$ conforming to the partitioning of $\Lambda_{\pm}$,
and define $Z = Q_1 H_1 + Q_2 H_2 \in\RR^{(n+q)\times n}$  for some 
$H_1\in\RR^{n\times n}$ nonsingular and $H_2\in\RR^{m\times n}$.
Then
\begin{eqnarray*}
Z^T {\cal M} Z &=& Z^TQ_1 \Lambda_+ Q_1^T Z - Z^TQ_2 |\Lambda_-| Q_2^T Z \\
&=& 
H_1^T \Lambda_+ H_1 - H_2^T |\Lambda_-| H_2 
= H_1^T ( \Lambda_+ - (H_2 H_1^{-1})^T |\Lambda_-| H_2 H_1^{-1}) H_1.
\end{eqnarray*}
Let $\hat H =  H_2 H_1^{-1}$, and denote with
$\lambda_{\min}^+$ the smallest eigenvalue of $\Lambda_+$, and with
$|\lambda_{\max}^-|$ the largest eigenvalue of $|\Lambda_-|$. 
Then, for any $0\ne x\in\RR^{n}$ and $y=H_1 x$ (note that
$y\ne 0$ due to the nonsingularity of $H_1$), we can write
$$
x^T Z^T {\cal M} Z x = y^T (\Lambda_+ - \hat H^T |\Lambda_-|  \hat H) y  \ge
(\lambda_{\min}^+ - \|\hat H\|^2 |\lambda_{\max}^-|) \|y\|^2.
$$
If $H_1, H_2$ are chosen so that $\alpha = \|\hat H\|^2$ satisfies
$\lambda_{\min}^+ - \|\hat H\|^2 |\lambda_{\max}^-| > 0$, then
$Z^TMZ$ is positive definite.

We next show that $H_1, H_2$ can be chosen so that $Z$ has the
form $Z=[I;K]$ for some $K$. Let us further partition $Q$ as 
$$
Q = [Q_1, Q_2] = \begin{bmatrix} Q_{11} & Q_{12}\\ Q_{21} & Q_{22} \end{bmatrix},
$$
so that $Z = [Q_{11};Q_{21}]H_1 + [Q_{12};Q_{22}] H_2$. Note that $Q_{11}$
is nonsingular, for the proof of the previous theorem.
 We then impose the structure of $Z$, that is
$$
\begin{bmatrix}Q_{11}\\Q_{21}\end{bmatrix} H_1 + 
\begin{bmatrix}Q_{12}\\Q_{22}\end{bmatrix} H_2 = 
\begin{bmatrix}I\\K\end{bmatrix}, \quad
Q_{11} H_1 + Q_{12} H_2 = I, \, K = Q_{21} H_1 + Q_{22} H_2.
$$
It follows that $H_1 = Q_{11}^{-1}(I - Q_{12}H_2)$. Therefore, for any $H_2$ such
that $I-Q_{12}H_2$ is nonsingular, the matrix $H_1$ is nonsingular, and
$K$ is well defined. The statement is proved by choosing 
$H_2$ so that $\alpha=\|H_2H_1^{-1} \|^2 = \|H_2(I-Q_{12}H_2)^{-1}Q_{11}\|^2$, with $\alpha$ satisfying 
$\lambda_{\min}^+ - \alpha |\lambda_{\max}^-| > 0$.

Finally, substituting $H_1$ is the relation $K = Q_{21} H_1 + Q_{22} H_2$ and collecting
terms we obtain
$K = Q_{21} Q_{11}^{-1} + (Q_{22}-Q_{21}Q_{11}^{-1} Q_{12}) H_2$.
\end{proof}

\section*{Appendix B}
In this Appendix we prove Proposition \ref{prop:rankt}.

\begin{proof}
The hypotheses ensure that $-(A +A ^T) + BK + K^TB^T > 0$. 
Let us introduce the following eigenvalue decomposition 
$$
{\cal J}:=\begin{bmatrix}0 & I \\ I & 0\end{bmatrix} =
\frac 1 2 \begin{bmatrix}I & I \\ I & -I\end{bmatrix} 
\begin{bmatrix} I & 0 \\ 0 & -I\end{bmatrix} 
\begin{bmatrix} I & I \\ I & -I\end{bmatrix}  =: W {\cal I} W^T.
$$
 Moreover, letting $[U_+, U_-] := [B, K^T] W =\sqrt{\frac 1 2} [B+K^T, B-K^T]$
we have
\begin{eqnarray*}
0 < -(A +A ^T) + BK + K^TB^T 
&=& -(A +A ^T) + [B, K^T] {\cal J} \begin{bmatrix}B^T\\K \end{bmatrix}\\
&=& -(A +A ^T) + [B, K^T] W {\cal I} W^T  \begin{bmatrix}B^T\\K \end{bmatrix} \\
&=& -(A +A ^T) + [U_+,U_-] {\cal I} [U_+,U_-]^T .
\end{eqnarray*}
Therefore, letting $\Lambda_-\in\RR^{t\times t}$ denote the negative eigenvalue matrix
of $-\sym(A )$ and
multiplying from both sides by $Q_-$,
\begin{eqnarray*}
0 < Q_-^T(-(A +A ^T) + BK + K^TB^T )Q_- 
&=& \Lambda_- + Q_-^T [U_+, U_-] {\cal I} [U_+, U_-]^T Q_- \\
&=& \Lambda_- - Q_-^T U_- U_-^T Q_- + Q_-^T U_+ U_+^TQ_- .
\end{eqnarray*}
Here the term $Q_-^T U_+ =\sqrt{\frac 1 2} Q_-^T(B+K^T)$ has dimensions $t\times q$.
Finally, we notice that the first two terms in the last
expression are negative definite, so
that, 
to satisfy
the positivity constraint the matrix $Q_-^T U_+ U_+^T Q_-$ must
move all $t$ eigenvalues of $\Lambda_- - Q_-^T U_- U_-^T Q_-$ to
the non-negative half real axis. In particular, its rank must be at least $t$.
\end{proof}


\begin{thebibliography}{10}
		
\bng
\bibitem[BMV19]{BMV19}
C.-A.~Beattie and V.~Mehrmann and P.~Van Dooren. 
\newblock Robust port-{H}amiltonian representations of passive systems. 
\newblock {\em Automatica J. IFAC}, 100:182--186, 2019.
\smallskip
\eng		

\bibitem[BEFB94]{BEFB94}
S.~Boyd, L. El~Ghaoui, E.~Feron, and V. Balakrishnan.
\newblock {\em Linear matrix inequalities in system and control theory.}
\newblock SIAM, Philadelphia (PA),  1994.
\smallskip

\bibitem[CC84]{CC84}
Y.~Chabrillac and J.-P.~Crouzeix. 
\newblock Definiteness and semidefiniteness of quadratic forms
              revisited. 
\newblock {\em Linear Algebra Appl.}, 63:283--292, 1984.
\smallskip



		

		

		
\bibitem[D59]{D59}
G.~Dahlquist. 
\newblock Stability and error bounds in the numerical integration of
              ordinary differential equations. 
\newblock {\em Kungl. Tekn. H\"{o}gsk. Handl. Stockholm. No.}, 130: pp 87, 1959.
\smallskip		
	
\bibitem[D17]{D17}
S.~Datta. 
\newblock Controller Norm Optimization for
Linear Time Invariant Descriptor Systems With
Pole Region Constraint.
\newblock {\em IEEE Trans. Automatic Control}, 62(6):2794--2806, 2017.
\smallskip		

\bibitem[ERS12]{ERS12}
J.~Eldred, L.~Rodman and I.~Spitkovsky. 
\newblock Numerical ranges of companion matrices: flat portions on the
              boundary.
\newblock {\em Linear Multilinear Algebra}, 60(11-12):1295--1311, 2012.
\smallskip		

\bibitem[FPEN86]{FPEN86}
G.F.~Franklin, J.D.~Powell, and A.~Emami-Naeini. 
\newblock {\em Feedback Control of Dynamic Systems}. 
\newblock Addison-Wesley, Reading, MA, 1986
\smallskip




		
\bng
\bibitem[GS18]{GS18}
N.~Gillis and P.~Sharma. 
\newblock Finding the nearest positive-real system. 
\newblock {\em SIAM J. Numer. Anal.}, 56(2):1022--1047, 2018.
\smallskip
\eng

		

\bibitem[GL17]{GL17}
N.~Guglielmi and C.~Lubich. 
\newblock Matrix stabilization using differential equations. 
\newblock {\em SIAM J. Numer. Anal.}, 55(6):3097--3119, 2017.
\smallskip


\bibitem[HLW06]{HLW06}
E.~Hairer, C. Lubich and G.~Wanner.
\newblock {\em Geometric numerical integration}.
\newblock Springer-Verlag,
  Berlin, 2006.
\smallskip


\bibitem[HPW02]{HPW02}
D.~Hinrichsen, E.~Plischke and F.~Wirth.
\newblock State feedback stabilization with guaranteed transient bounds.
\newblock {\em Proceedings of MTNS-2002, Notre Dame, Indiana, 2002}, paper no. 2132, 2002.
\smallskip



\bibitem[HP10]{HP10}
D.~Hinrichsen and A.J.~Pritchard.
\newblock {\em Mathematical systems theory {I}, 
Modelling, state space analysis, stability and robustness}.
\newblock Springer-Verlag, Heidelberg, 2010.
\smallskip

\bibitem[HJ13]{HJ13}
R.A.~Horn and C.R.~Johnson.
\newblock Matrix analysis.
\newblock Cambridge University Press, 2nd ed, 2013.
\smallskip  		
		
\bng		
\bibitem[JZ12]{JZ12}
B.~Jacob and H.J.~Zwart.
\newblock Linear port-{H}amiltonian systems on infinite-dimensional spaces.
\newblock Operator Theory: Advances and Applications.
\newblock Birkh\"{a}user/Springer Basel AG, 2nd ed, 2012.
\smallskip  		
\eng







\bibitem[L79]{L79}
D.G.~Luenberger.
\newblock {\em Introduction to Dynamic Systems: Theory, Models and Applications}.
\newblock John Wiley \& Sons, New York, 1979.
\smallskip  	




\bibitem[Mosek]{Mosek}
\newblock MOSEK Modeling Cookbook.
\newblock Userguide. Release 3.0. August 2018. 
{\tt https://docs.mosek.com/MOSEKModelingCookbook-a4paper.pdf}
\smallskip

\bibitem[PP92]{PP92}
G. Picci and S. Pinzoni.
\newblock On feedback-dissipative systems.
\newblock {\em J. Math. Systems Estim. Control}, 2(1):1--30, 1992.
\smallskip

\bibitem[S16]{S16}
V.~Simoncini.
\newblock Analysis of the rational {K}rylov subspace projection method
              for large-scale algebraic {R}iccati equations.
\newblock {\em SIAM J. Matrix Anal. Appl.}, 37(4):1655--1674, 2016.
\smallskip

\bibitem[SIG98]{Skeltonetal.98}
R.~E. Skelton, T. Iwasaki and K. Grigoriadis.
\newblock{A unified algebraic approach to linear Control Design.}
\newblock{Taylor \& Francis, London, UK, 1998, 285 pages.}
\smallskip

\bibitem[Sedumi]{Sedumi}
J.~F. Sturm.
\newblock Using {SeDuMi} 1.02, a Matlab Toolbox for optimization
over symmetric cones.
\newblock {\em Optimization Methods and Software}, 11-12: 625--653, 1999.
\smallskip

\bibitem[S06]{S06}
G.~S\"{o}derlind. 
\newblock The logarithmic norm. {H}istory and modern theory. 
\newblock {\em BIT}, 46(3):631--652, 2006.
\smallskip		


\bng
\bibitem[vdS06]{vdS06}
A.~van der Schaft. 
\newblock Port-{H}amiltonian systems: an introductory survey. 
\newblock {\em International {C}ongress of {M}athematicians. {V}ol. {III}}, 1339--1365, 2006.
\smallskip		
\eng

\bibitem[WMcK07]{WMcK07}
J.~F. Whidborne and J. McKernan.
\newblock On the Minimization of Maximum Transient Energy Growth.
\newblock {\em IEEE Trans. Automatic Control,}
52(9):1762--1767, 2007.
\smallskip

\bibitem[W73]{W73}
H.~Wielandt.
\newblock On the Eigenvalues of $A + B$ and $AB$.
\newblock {\em Journal of research of the National Bureau 
of Standards - B. Mathematical Sciences}, 778(1 \& 2):61--163, 1973.


\end{thebibliography}
\end{document}